 \newtheorem{theorem}{\sc\bf Theorem}[section]
 \newtheorem{corollary}[theorem]{\sc\bf Corollary}
 \newtheorem{lemma}[theorem]{\sc\bf Lemma}
 \newtheorem{proposition}[theorem]{\sc\bf Proposition}
 \newtheorem{definition}[theorem]{\sc\bf Definition}
 \newtheorem{remark}[theorem]{\sc\bf Remark}
  \numberwithin{equation}{section}
\title{{\bf B-Fredholm theory in Banach algebras}}
\author{Yunnan \textsc{Zhang}$^{\small \makebox{a}}$, Qingping \textsc{Zeng}$^{\small \makebox{b},}$\thanks{Corresponding author. Email address: zqpping2003@163.com.} \ \ and Zhenying \textsc{Wu}$^{\small \makebox{a}}$ \\
\small a. School of Mathematics and Statistics, Fujian Normal University, Fuzhou 350117, P.R. China \\
\small b. College of Computer and Information Sciences, Fujian Agriculture and Forestry University, \\ \small  Fuzhou 350002, P.R. China  }
\begin{document}

\date{}
\maketitle

\large

\begin{quote}
 {\bf Abstract:} ~ The aim of this paper is to develop a systematic B-Fredholm theory in semiprime Banach algebras. We first generalize Smyth's important punctured neighbourhood theorem to B-Fredholm elements. Then using this result, we investigate the local spectral theory of B-Fredholm elements, including the localized left (resp. right) SVEP and a classification of components of B-Fredholm resolvent set. Finally, in semisimple Banach algebra context, we characterize element $f$ such that $f^{n}$ belongs to the socle for some $n \in \mathbb{N}$ from two different perspectives: one is the invariance of the B-Fredholm spectrum under commuting perturbation $f$, the other is the Rieszness and the B-Fredholmness of $f$.
  \\
{\bf  2010 Mathematics Subject Classification:} Primary 46H05, 46H10; Secondary 47A53, 47A55\\
{\bf Key words:} B-Fredholm, socle, punctured neighbourhood theorem, local spectral theory, perturbation theory, Riesz, Banach algebra
\end{quote}

\section{Introduction} \label{sec1}

It is Atkinson's theorem ([\cite{Barnes-Murphy-Smyth-West}, Theorem O.2.2]) that the set of Fredholm operators on a Banach space $X$ can be characterized as those bounded linear operators invertible modulo the finite rank ideal $F(X)$. It follows from this characterization that Fredholm operators on Banach spaces has a natural extension to the more general setting of Banach algebras, by replacing the ideal $F(X)$ with the ideal $soc(\mathcal{A})$, the socle of a Banach algebra $\mathcal{A}$. Fredholm theory in Banach algebras was pioneered by B.A. Barnes [\cite{Barnes-Fredholm-element,Barnes-Fredholm-theory}], and was further developed by M.R.F. Smyth in [\cite{Smyth-Fredholm-theory}],
see also the monograph [\cite{Barnes-Murphy-Smyth-West,Aiena-book}] and the references [\cite{Grobler-Raubenheimer,Grobler-Raubenheimer-II,Smyth,Harte-Fredholm-theory,Mouton-Raubenheimer,Pearlman-Riesz-points}], etc.

In [\cite{Berkani-BFredholm-operators}], M. Berkani introduced the class of B-Fredholm operators, which contains the class of Fredholm operators as a proper subclass, and an Atkinson type characterization for these operators was obtained in [\cite{Berkani-Sarih-BFredholm-operators}]: $T$ is a B-Fredholm operator on a Banach space $X$ if and only if $T$ is Drazin invertible modulo the finite rank ideal $F(X)$. This characterization also leads to a natural definition of B-Fredholm elements in Banach algebras. Basic properties and the index of this class of elements were firstly investigated in [\cite{Berkani-BFredholm,Berkani-rings}].

In this paper, we are aimed to develop a systematic B-Fredholm theory in semiprime Banach algebras. In Section 2, Smyth's punctured neighbourhood theorem is generalized to B-Fredhollm elements. It plays a central role in our investigations. The subsequent two sections address the local spectral theory of B-Fredholm elements.
In Section 3, we characterize the left and right single-valued extension property at $\lambda_{0} \in \mathbb{C}$ for $x \in \mathcal{A}$ in the case that $\lambda_{0}-x$ is a B-Freholm element. Then using these equivalences, in Section 4, we obtain a classification of components of B-Fredholm resolvent set. We also give some interesting applications of the classification. In particular, we can see that the elements having empty B-Fredholm spectrum are exactly those algebraic elements, i.e., the elements that satisfy a non-trivial polynomial identity. In Section 5, we show that the B-Fredholm spectrum is invariant under any commuting perturbation $f$ such that $f^{n} \in soc(\mathcal{A})$ for some $n \in \mathbb{N}$, and conversely this perturbation property characterizes such elements $f$ in the case that $\mathcal{A}$ is semisimple, by using the characterization of algebraic elements and some techniques developed in [\cite{Haily-Kaidi-Palacios}].
In the last section, we characterize such elements $f$ from a different perspective. In particular, we prove that the class of such elements is exactly the intersection of the class of Riesz elements and the class of B-Fredholm elements.

These results generalize the corresponding ones in Banach spaces, using different techniques. Due to the lack of underlying Banach space $X$, the spectral theory, including B-Fredholm theory, in Banach algebras is more difficult than that in Banach spaces. In the coming up manuscripts, we will develop a systematic spectral theory in Banach algebras, basing on the results obtained in the present paper.

An algebra $\mathcal{A}$ is said to be semiprime if $\{0\}$ is the only two-sided ideal $J$ for which $J^{2}=\{0\}$. Throughout this paper, we always assume that $\mathcal{A}$ is a semiprime, complex and unital Banach algebra, unless otherwise specified.

\section{The punctured neighbourhood theorem for B-Fredhollm elements} \label{sec2}

 A non-zero idempotent $e \in \mathcal{A}$ is minimal if $e\mathcal{A}e$ is a division algebra.
Let $Min(\mathcal{A})$ denote the set of minimal idempotents of $\mathcal{A}$. It is well known that $I$ is a minimal left (resp. right) ideal if and only if $I=\mathcal{A}e$ (resp. $I=e\mathcal{A}$) for some $e \in Min(\mathcal{A})$ (see [\cite{Bonsall-Duncan}, Proposition 30.6]). The following concept is important to develop Fredholm theory in Banach algebra.

\begin {definition} ${\label{2.1}}$  \begin{upshape}(see [\cite{Barnes-Fredholm-theory}])
A right (resp. left) ideal $J$ of  $\mathcal{A}$ is said to be of finite order if $J$ can be written as the sum of a finite number of minimal right (resp. left) ideals of $\mathcal{A}$. The order $\Theta(J)$ of $J$ is defined as the smallest number of minimal right (left) ideals which have sum $J$. By convention, $\Theta(\{0\})=0$ and $\Theta(J)=\infty$ if $J$ does not have finite order.
 \end{upshape}
\end {definition}

The socle of $\mathcal{A}$, $soc(\mathcal{A})$ is defined as the sum of the minimal right ideals (which equals to the sum of the minimal left ideals) or $\{0\}$ if there are none minimal right ideals. When $\mathcal{A}$ is semiprime, $soc(\mathcal{A})$ always exists (see [\cite{Bonsall-Duncan}, Proposition 30.10]).

\begin {lemma} ${\label{2.2}}$  \begin{upshape}  (see [\cite{Barnes-Fredholm-theory,Smyth-Fredholm-theory}])
Let $J$ and $K$ be right (left) ideals of $\mathcal{A}$.

$(1)$\,\, $\Theta(J)=n$ if and only if there exist orthogonal minimal idempotents $e_{1},\cdots,e_{n}$ such that
$J=e_{1}\mathcal{A}\oplus \cdots \oplus e_{n}\mathcal{A}$ ($J=\mathcal{A}e_{1}\oplus \cdots \oplus\mathcal{A}e_{n}$).

$(2)$\,\, If $\Theta(K)<\infty$ and $J$ is properly contained in $K$, then $J$ has finite order and $\Theta(J)<\Theta(K)$.

$(3)$\,\, $\Theta(x\mathcal{A})=\Theta(\mathcal{A}x)$ for every $x \in \mathcal{A}$.

$(4)$\,\, $soc(\mathcal{A})=\{x \in \mathcal{A}: \Theta(x\mathcal{A}) <\infty \}$.

$(5)$\,\, $J \subseteq soc(\mathcal{A})$ if and only if $\Theta(J) <\infty \}$.
\end{upshape}
\end {lemma}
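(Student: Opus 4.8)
The plan is to treat a right ideal $J$ as a right $\mathcal A$-submodule of $\mathcal A$, under which a minimal right ideal is precisely a simple submodule and, by the cited Proposition 30.6, has the form $e\mathcal A$ with $e \in Min(\mathcal A)$. The first thing I would record is the reduction that drives everything: a right ideal of finite order is exactly a semisimple submodule of finite composition length, and $\Theta(J)$ coincides with that length $\ell(J)$. Indeed, a finite sum $J = S_1 + \cdots + S_m$ of simple submodules always collapses to a direct sum $\bigoplus_{i \in T} S_i$ over a maximal independent subfamily $T$, and the Jordan--Hölder theorem makes the number of summands in any such direct decomposition an invariant of $J$. Hence the smallest number of minimal right ideals with sum $J$ is realized exactly by a direct decomposition and equals $\ell(J)$. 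This converts the lemma into statements about lengths of semisimple modules.

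For (1): given $\Theta(J) = n$, the minimal representation must already be direct, since a redundant summand would shorten it, so $J = \epsilon_1\mathcal A \oplus \cdots \oplus \epsilon_n\mathcal A$ with each $\epsilon_j$ a minimal idempotent. I would then pass to orthogonal idempotents by the standard inductive orthogonalization: a finite-order right ideal is generated by an idempotent, and such an idempotent decomposes into orthogonal minimal idempotents $e_1,\dots,e_n$ with $J=\bigoplus_j e_j\mathcal A$, replacing $\epsilon_{k+1}$ at each step by a correction lying in $(1-\!\sum_{j\le k} e_j)\mathcal A$. The converse is immediate from the length count: $n$ simple modules in direct sum give $\ell(J)=n=\Theta(J)$. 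For (2), since $\Theta(K)<\infty$ the module $K$ is semisimple of finite length, hence completely reducible, so the submodule $J$ is a direct summand, $K=J\oplus J'$. Additivity of length gives $\ell(J)+\ell(J')=\ell(K)$, and proper containment forces $J'\neq 0$, whence $\ell(J')\ge 1$ and $\Theta(J)=\ell(J)<\ell(K)=\Theta(K)$; in particular $J$ has finite order.

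The genuinely delicate point is (3), the left--right duality $\Theta(x\mathcal A)=\Theta(\mathcal A x)$, since the right module $x\mathcal A$ and the left module $\mathcal A x$ live in different categories. My plan is to reduce to idempotents and exploit that the defining condition ``$e\mathcal A e$ a division algebra'' is left--right symmetric, so a minimal idempotent generates a minimal ideal on \emph{both} sides. Assume first $\Theta(x\mathcal A)=n<\infty$; by (1), $x\mathcal A=e\mathcal A$ for $e=e_1+\cdots+e_n$ a sum of orthogonal minimal idempotents, and then $\mathcal A e=\bigoplus_j \mathcal A e_j$ is a direct sum of $n$ minimal \emph{left} ideals, so the left order of $\mathcal A e$ is also $n$. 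It remains to compare $\mathcal A x$ with $\mathcal A e$. Using unitality, $x\in x\mathcal A=e\mathcal A$ gives $ex=x$, and writing $e=xb$ one checks that $R_x\colon \mathcal A e\to\mathcal A x,\ ae\mapsto ax$ and $R_b\colon \mathcal A x\to\mathcal A e,\ ax\mapsto ae$ are surjective homomorphisms of left modules. Since a surjection cannot increase length, this sandwich forces $\Theta(\mathcal A x)=\Theta(\mathcal A e)=n$, hence $\Theta(\mathcal A x)=\Theta(x\mathcal A)$; the case $\Theta(x\mathcal A)=\infty$ follows because the symmetric argument would otherwise pull a finite left order back to a finite right order. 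I expect the surjectivity bookkeeping and the idempotent normalization to be the main obstacle here, since everything hinges on choosing the generating idempotent so that $ex=x$ and $xb=e$ simultaneously drive both maps.

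Finally, (4) and (5) are harvested from the principal case. For (4), if $\Theta(x\mathcal A)<\infty$ then $x=x\cdot 1\in x\mathcal A\subseteq soc(\mathcal A)$; conversely $x\in soc(\mathcal A)$ lies in a finite sum $I_1+\cdots+I_p$ of minimal right ideals, and since each $I_k$ absorbs right multiplication, $x\mathcal A\subseteq I_1+\cdots+I_p$ has order at most $p$. Statement (5) is the ideal-level form of the same dichotomy: $\Theta(J)<\infty$ exhibits $J$ as a finite sum of minimal right ideals, each inside $soc(\mathcal A)$, so $J\subseteq soc(\mathcal A)$; in the reverse direction I would apply (4) to a generating family of $J$ and sum the resulting finite orders to bound $\Theta(J)$. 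This closes the chain, with the module-length reduction and the duality in (3) carrying essentially all of the weight.
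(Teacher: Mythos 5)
The paper offers no proof of this lemma at all --- it is quoted verbatim from Barnes and Smyth --- so there is nothing internal to compare against; judged on its own terms, your module-length framework is the right one for (1)--(4), and the sandwich of surjective left-module maps $R_x\colon \mathcal{A}e \to \mathcal{A}x$ and $R_b\colon \mathcal{A}x \to \mathcal{A}e$ (with $ex=x$ and $xb=e$) is a clean and correct way to obtain the left--right symmetry in (3), since length cannot increase under a surjection and both composites are onto. The one place in (1) where you are leaning on the conclusion rather than proving it is the orthogonalization: that a direct sum $\epsilon_1\mathcal{A}\oplus\cdots\oplus\epsilon_n\mathcal{A}$ of minimal right ideals can be regenerated by \emph{orthogonal} minimal idempotents is essentially the whole content of Barnes' lemma (one must check at each inductive step that the corrected element is again a minimal idempotent and generates the same ideal, and semiprimeness enters through the representation $I=e\mathcal{A}$ of minimal right ideals); labelling it ``standard'' leaves the only nontrivial part of (1) unargued.

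The genuine gap is the reverse implication in (5). Your plan --- apply (4) to a generating family of $J$ and sum the resulting finite orders --- yields a finite bound only when the generating family is finite, i.e.\ when $J$ is finitely generated as a right ideal. For a general right ideal $J\subseteq soc(\mathcal{A})$ the conclusion $\Theta(J)<\infty$ is simply false: take $\mathcal{A}=\mathcal{B}(H)$ with $H$ infinite dimensional and $J=soc(\mathcal{A})=F(H)$. Every minimal right ideal of $\mathcal{B}(H)$ has the form $e\mathcal{B}(H)$ with $e$ of rank one, so any sum of $p$ minimal right ideals consists of operators of rank at most $p$; hence $F(H)$ is not a finite sum of minimal right ideals and $\Theta(J)=\infty$, although $J\subseteq soc(\mathcal{A})$. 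So the implication ``$J\subseteq soc(\mathcal{A})\Rightarrow\Theta(J)<\infty$'' must be restricted to finitely generated (in particular principal) ideals, which is the only form in which it is actually needed later in the paper; as written, your argument cannot close this direction, and no argument can.
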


For $x\in \mathcal{A}$, the right annihilator of $x$ in $\mathcal{A}$ is defined by
$$R(x)=\{a \in \mathcal{A}: xa=0\},$$
while the left annihilator of $x$ in $\mathcal{A}$ is defined by
$$L(x)=\{a\in \mathcal{A}:ax=0\}.$$

\begin {definition} ${\label{2.3}}$  \begin{upshape}
For $x\in \mathcal{A}$, the nullity and defect of $x$ are defined by $null(x)=\Theta(R(x))$ and $def(x)=\Theta(L(x))$ respectively.
 \end{upshape}
\end {definition}

Let $\mathcal{B}(X)$ denote the Banach algebra of all bounded linear operators on a Banach space $X$. For $T \in \mathcal{B}(X)$, the nullity and defect of $T$ as an operator are defined as $n(T)=\dim ker(T)$ and $d(T)=\dim X/ran(T)$, where $ker(T)$ and $ran(T)$ are the kernel and range of $T$, respectively. For left or right Fredholm operator $T$, the nullity (resp. defect) of $T$ as an element equals to that of $T$ as an operator:

\begin {proposition} ${\label{2.4}}$  \begin{upshape}
Let $T \in \mathcal{B}(X)$ be left or right Fredholm. Then $null(T)=n(T)$ and $def(T)=d(T)$.
 \end{upshape}
\end {proposition}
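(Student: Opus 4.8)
The plan is to reduce both equalities to the structure of the minimal idempotents of $\mathcal{B}(X)$, which are precisely the rank-one idempotents. I would first record the dictionary: write such an idempotent as $e = u \otimes \phi$ with $u \in X$, $\phi \in X^{*}$ and $\phi(u)=1$ (so $ex = \phi(x)u$). A direct computation gives $eae = \phi(au)\,e$, so $e\mathcal{B}(X)e = \mathbb{C}e$ is a division algebra and $e$ is indeed minimal; moreover $e\mathcal{B}(X) = \{S : \operatorname{ran} S \subseteq \mathbb{C}u\}$ and $\mathcal{B}(X)e = \{v \otimes \phi : v \in X\}$. These identities are what translate the orders $\Theta(R(T))$ and $\Theta(L(T))$ into dimensions.

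For $null(T)=n(T)$ I would argue as follows, and in fact this half needs no Fredholm hypothesis at all, since it involves only the (always closed) kernel $N = \ker T$; note $R(T) = \{a : \operatorname{ran} a \subseteq N\}$. If $\dim N = n < \infty$, then $N$ is complemented; choosing a basis $u_{1},\dots,u_{n}$ and a bounded projection $P = \sum_{i} u_{i}\otimes\phi_{i}$ onto $N$, the $e_{i}=u_{i}\otimes\phi_{i}$ are orthogonal minimal idempotents with $Te_{i} = (Tu_{i})\otimes\phi_{i}=0$, and $a = Pa = \sum_{i} e_{i}a$ for every $a \in R(T)$. Hence $R(T) = e_{1}\mathcal{B}(X)\oplus\cdots\oplus e_{n}\mathcal{B}(X)$, and Lemma \ref{2.2}$(1)$ gives $null(T)=\Theta(R(T)) = n = n(T)$. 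If $\dim N = \infty$, the same construction yields, for each $k$, a subideal of $R(T)$ of order $k$, so Lemma \ref{2.2}$(2)$ forces $\Theta(R(T)) = \infty = n(T)$.

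For $def(T)=d(T)$ the Fredholm hypothesis enters, because I now need $M = \operatorname{ran} T$ to be closed: a left or right Fredholm operator is in particular semi-Fredholm, hence has closed range, so that $L(T) = \{a : aT = 0\} = \{a : a|_{M}=0\}$ is governed by $M$ itself rather than by its closure. When $d = \operatorname{codim} M < \infty$, the subspace $M$ is complemented, say $X = M \oplus V$ with $\dim V = d$; letting $Q$ be the projection onto $V$ along $M$, the condition $a|_{M}=0$ is equivalent to $a = aQ$, whence $L(T) = \mathcal{B}(X)Q$. Decomposing $Q = f_{1}+\cdots+f_{d}$ into orthogonal rank-one idempotents gives $L(T) = \mathcal{B}(X)f_{1}\oplus\cdots\oplus\mathcal{B}(X)f_{d}$, so $def(T)=\Theta(L(T)) = d = d(T)$ by Lemma \ref{2.2}$(1)$. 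The remaining case $d = \infty$ (closed range of infinite codimension, which can occur only in the left-Fredholm situation) is handled exactly as for the nullity: functionals annihilating $M$ produce subideals of $L(T)$ of every finite order, so $\Theta(L(T)) = \infty = d(T)$ by Lemma \ref{2.2}$(2)$.

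The main obstacle I anticipate is the defect equality rather than the nullity one. Two points require care: first, the reduction $L(T) = \mathcal{B}(X)Q$ relies on $M$ being closed and complemented, which is precisely why the semi-Fredholm hypothesis cannot be dropped (for non-closed range one would control only $\overline{\operatorname{ran} T}$, and $def(T)$ could then differ from $d(T)$); second, one must keep track of which of $n(T)$, $d(T)$ is guaranteed finite in the ``left'' versus the ``right'' case, so that the finite-order decomposition is applied where it is legitimate and the Lemma \ref{2.2}$(2)$ argument covers the complementary infinite case. Once the minimal-idempotent dictionary above is in place, both verifications are short.
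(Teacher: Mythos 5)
Your proof is correct, but it takes a different route from the paper's. The paper invokes the Atkinson-type factorization: for $T$ left Fredholm it takes $S$ and a finite-rank projection $P$ with $ST=I-P$ and $rank(P)=n(T)$, observes that $R(T)=P\mathcal{B}(X)$, and reads off $null(T)=rank(P)$; the right Fredholm/defect case is handled symmetrically, and the infinite cases are simply asserted in one line. You instead bypass the factorization entirely and compute $\Theta(R(T))$ and $\Theta(L(T))$ directly from $\ker T$ and $ran(T)$, using explicit decompositions of the projections onto $\ker T$ (resp.\ onto a complement of $ran(T)$) into orthogonal rank-one idempotents, together with Lemma \ref{2.2}(1)--(2). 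The two arguments meet at the last step --- identifying the order of $P\mathcal{B}(X)$ for a rank-$n$ idempotent $P$ is exactly your rank-one decomposition --- but your version has two advantages: it isolates precisely where the Fredholm hypothesis is used (only to guarantee that $ran(T)$ is closed and complemented, so that $L(T)=\mathcal{B}(X)Q$; the nullity identity holds for arbitrary bounded operators), and it supplies an actual argument for the infinite-order cases ($n(T)=\infty$ or $d(T)=\infty$) via subideals of arbitrarily large finite order, which the paper leaves implicit. The paper's route is shorter once Atkinson's theorem is taken as known; yours is more self-contained and makes the dictionary between orders of annihilator ideals and dimensions completely explicit.
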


\begin{proof} Let $T$ be left Fredholm. Then there exist $S \in \mathcal{B}(X)$ and $P \in soc(\mathcal{B}(X))=F(X)$ such that
$ST=I-P$ and the rank $rank(P)$ of $P$ equals to $n(T)$, where $F(X)$ denotes the ideal of finite rank operators on $X$. Observe that $R(T)=P\mathcal{B}(X)$.
It follows that $null(T)=rank(P)=n(T)$. A similar proof shows that if $T$ is right Fredholm, then $def(T)=d(T)$.

In the case $T$ is left (right) Fredholm  but not Fredholm, we have $def(T)=d(T)=\infty$ $(null(T)=n(T)=\infty)$.
\end{proof}

\begin {definition} ${\label{2.5-a}}$ \begin{upshape} (see [\cite{Barnes-Fredholm-element}, Definition 2.1]) An element $a \in \mathcal{A}$ is called Fredholm if $a$ is invertible modulo $\mathrm{soc}(A)$.
 \end{upshape}
\end {definition}

Recall that an element $a$ in a ring $\mathcal{R}$ is called Drazin invertible if there exists $b \in \mathcal{R}$
such that
$$bab=b, ab=ba \makebox{ and } a^{k}ba=a^{k}$$
for some $k \in \mathbb{N}$. In this case, $b$ is called the Drazin inverse of $a$.
If the Drazin inverse of $a$ exists, it is unique and belongs to the double commutant of $a$.
The Drazin index of $a$ is the least non-negative integer $k$ for which the above equations hold.

\begin {definition} ${\label{2.5}}$ \begin{upshape} (see [\cite{Berkani-BFredholm}, Definition 1.1]) An element $a \in \mathcal{A}$ is called B-Fredholm if $\pi(a)$ is Drazin invertible in the quotient algebra $\mathcal{A}/ \mathrm{soc}(A)$, where $\pi:\mathcal{A}\rightarrow \mathcal{A}/ \mathrm{soc}(A)$ is the canonical homomorphism.
 \end{upshape}
\end {definition}

 In the case of $soc(\mathcal{A})=\{0\}$, the B-Fredholm elements in $\mathcal{A}$ are exactly the Drazin invertible elements in $\mathcal{A}$. For this reason, from now on we always assume that $soc(\mathcal{A})$ is not reduced to $\{0\}$.

Denoted by $B\Phi(\mathcal{A})$ the set of all B-Fredholm elements in $\mathcal{A}$. Recall that an element $a \in \mathcal{A}$ is relatively regular if $aba=a$ for some $b \in \mathcal{A}$. In this case $b$ is called an inner inverse of $a$.
If $a \in \mathcal{A}$ is a relatively regular element (with an inner inverse $b$), then $p:=ab$ is an idempotent satisfying $a \mathcal{A}=p\mathcal{A}$, thus $a \mathcal{A}$ is closed.

In the following, we give an improvement of Smyth's punctured neighbourhood theorem [\cite{Smyth-Fredholm-theory}, Theorem 4.6]. This result is crucial in the B-Fredholm theory.

\begin {theorem} ${\label{2.6}}$  \begin{upshape}
Let $x \in B\Phi(\mathcal{A})$. Then there exists $\varepsilon >0$ such that for $0<|\lambda|<\varepsilon$ and sufficiently large $m \in \mathbb{N}$,

$(1)$ $x-\lambda$ is Fredholm.

$(2)$ $null(x-\lambda)$ equals to the constant $\Theta(R(x)\cap x^{m}\mathcal{A})\leq null(x)$.

$(3)$ $def(x-\lambda)$ equals to the constant $\Theta(L(x)\cap \mathcal{A}x^{m}) \leq def(x)$.
 \end{upshape}
\end {theorem}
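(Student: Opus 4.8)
The plan is to reduce the statement to Smyth's punctured neighbourhood theorem [\cite{Smyth-Fredholm-theory}, Theorem 4.6] for Fredholm elements, by splitting $x$ through the Drazin structure of $\pi(x)$ into a Fredholm part and a part that is algebraic (in particular nilpotent modulo the socle). Since $x\in B\Phi(\mathcal{A})$, the element $\pi(x)$ is Drazin invertible, say of index $k$, with Drazin inverse $\pi(x)^{D}$ in its double commutant; the spectral idempotent $q=\pi(1)-\pi(x)\pi(x)^{D}$ then commutes with $\pi(x)$, the element $\pi(x)q$ is nilpotent of order $k$, and $\pi(x)(\pi(1)-q)+q$ is invertible in $\mathcal{A}/soc(\mathcal{A})$. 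The first task is to realize this splitting inside $\mathcal{A}$ itself: I would produce a genuine idempotent $p\in\mathcal{A}$ with $\pi(p)=q$ that commutes with $x$, and set $x_{1}=x(1-p)$, $x_{2}=xp$. With such a $p$ one has $\pi(x_{1})=\pi(x)(1-q)$ invertible in the appropriate corner, so $x_{1}$ is Fredholm in $(1-p)\mathcal{A}(1-p)$, while $\pi(x_{2})=\pi(x)q$ is nilpotent, so $x_{2}^{k}\in soc(\mathcal{A})$.

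I expect the construction of $p$ to be the main obstacle, because $soc(\mathcal{A})$ need not be closed and $0$ need not be isolated in the spectrum of $x$ in $\mathcal{A}$, so the holomorphic functional calculus cannot be applied to $x$ directly. My intended route is as follows. Choosing a representative $b\in\mathcal{A}$ of $\pi(x)^{D}$, the element $e_{0}=1-bx$ satisfies $\pi(e_{0})=q$, hence $e_{0}^{2}-e_{0}\in soc(\mathcal{A})$, and a short computation gives $e_{0}x-xe_{0}=(xb-bx)x\in soc(\mathcal{A})$. Since every element of $soc(\mathcal{A})$ is algebraic (it has finite spectrum), $e_{0}^{2}-e_{0}$ is algebraic, whence so is $e_{0}$; its spectral idempotent for the eigenvalue $1$ is then a polynomial in $e_{0}$ and yields an idempotent $p$ with $\pi(p)=q$. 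The delicate point is to upgrade the relation $px-xp\in soc(\mathcal{A})$ to exact commutation; I would secure this by building $p$ from the Riesz idempotent of the image of $x$ in the Banach algebra $\mathcal{A}/\overline{soc(\mathcal{A})}$ (where $0$ is isolated in the spectrum, Drazin invertibility being preserved under homomorphisms) and lifting it through the closed ideal $\overline{soc(\mathcal{A})}$ inside the commutant of $x$.

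Granting the splitting, the remaining steps are routine. Applying Smyth's theorem to the Fredholm element $x_{1}$ in the corner $(1-p)\mathcal{A}(1-p)$ yields $\varepsilon_{1}>0$ such that, for $0<|\lambda|<\varepsilon_{1}$ and large $m$, $x_{1}-\lambda(1-p)$ is Fredholm there with nullity $\Theta\big(R(x_{1})\cap x_{1}^{m}(1-p)\mathcal{A}\big)$ and defect $\Theta\big(L(x_{1})\cap\mathcal{A}x_{1}^{m}\big)$ constant. On the complementary corner, $x_{2}\in p\mathcal{A}p$ is algebraic (as $x_{2}^{k}\in soc(\mathcal{A})$), hence Drazin invertible there with $0$ isolated in its spectrum; shrinking to some $\varepsilon\le\varepsilon_{1}$ below the finitely many nonzero eigenvalues of $x_{2}$, the element $x_{2}-\lambda p$ is invertible in $p\mathcal{A}p$ for $0<|\lambda|<\varepsilon$. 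Because $p$ commutes with $x$, the Peirce decomposition along $p$ makes $x-\lambda$ block diagonal with blocks $x_{1}-\lambda(1-p)$ and $x_{2}-\lambda p$; the right and left annihilators, and hence nullity and defect, are additive over the two corners. This proves $(1)$ and shows $null(x-\lambda)=null(x_{1}-\lambda(1-p))$ and $def(x-\lambda)=def(x_{1}-\lambda(1-p))$ on the punctured disc, since the invertible block contributes nothing.

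It remains to identify the constant value. Decomposing along $p$ gives $R(x)\cap x^{m}\mathcal{A}=\big(R(x_{1})\cap x_{1}^{m}(1-p)\mathcal{A}\big)\oplus\big(R(x_{2})\cap x_{2}^{m}p\mathcal{A}\big)$, and the second summand vanishes once $m$ exceeds the Drazin index of $x_{2}$ in $p\mathcal{A}p$, because on the stable range of $x_{2}$ the map is left invertible. Hence $\Theta(R(x)\cap x^{m}\mathcal{A})=\Theta\big(R(x_{1})\cap x_{1}^{m}(1-p)\mathcal{A}\big)=null(x-\lambda)$ by Smyth's formula for $x_{1}$; the argument for $def$ is symmetric, using $\Theta(x\mathcal{A})=\Theta(\mathcal{A}x)$ from Lemma \ref{2.2}(3). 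Finally, $R(x)\cap x^{m}\mathcal{A}\subseteq R(x)$ gives $\Theta(R(x)\cap x^{m}\mathcal{A})\le\Theta(R(x))=null(x)$ by Lemma \ref{2.2}(2), and likewise $\Theta(L(x)\cap\mathcal{A}x^{m})\le def(x)$, which completes the proof.
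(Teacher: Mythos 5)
The key step of your argument --- producing an idempotent $p\in\mathcal{A}$ that commutes \emph{exactly} with $x$ and lifts the spectral idempotent $q$ of $\pi(x)$ --- is a genuine gap; you correctly flag it as the delicate point but do not close it. From a representative $b$ of the Drazin inverse you only get $e_{0}=1-bx$ with $e_{0}^{2}-e_{0}\in soc(\mathcal{A})$ and $e_{0}x-xe_{0}\in soc(\mathcal{A})$, so any polynomial in $e_{0}$ (in particular its spectral idempotent at $1$) still commutes with $x$ only modulo the socle. Idempotents do lift modulo $soc(\mathcal{A})$ (it is a von Neumann regular ideal), but lifting an idempotent \emph{into the commutant of $x$} is a far stronger demand: the commutant of $x$ is a closed subalgebra whose intersection with $soc(\mathcal{A})$ or $\overline{soc(\mathcal{A})}$ carries no usable ideal or socle structure, and idempotents need not lift modulo a general closed ideal of a Banach algebra at all, let alone within a prescribed commutant. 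What you are really asserting is a Banach-algebra analogue of the Berkani--Sarih decomposition of a B-Fredholm operator into a Fredholm part plus a nilpotent part; in $\mathcal{B}(X)$ that decomposition is proved via the Kato/quasi-Fredholm structure of the underlying space, and no such result is available (nor established in this paper) for general semiprime Banach algebras. So the reduction to Smyth's theorem does not get off the ground. A secondary issue: even granting the splitting, you invoke Smyth's theorem to identify the constant value of the nullity as $\Theta\big(R(x_{1})\cap x_{1}^{m}(1-p)\mathcal{A}\big)$ in the corner algebra, whereas the classical statement gives constancy and the bound by the nullity of $x_{1}$; the identification of the constant is itself part of what has to be proved.

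The paper's proof takes an entirely different route that avoids splitting $x$. It uses that $x^{n}$ is generalized Fredholm, corrects the relative inverse so that $x^{n}y_{0}x^{n}=x^{n}$ exactly, shows $R(x)\cap x^{n}\mathcal{A}$ is contained in the right annihilator of the Fredholm element $s=1-x^{n}y_{0}-y_{0}x^{n}$ (hence has finite order), proves the hyperrange $M=\bigcap_{k}x^{k}\mathcal{A}$ is closed with $xM=M$, writes $R(x)\cap M=p\mathcal{A}$ for an idempotent $p\in soc(\mathcal{A})$, and then perturbs the invertible map $\hat{x}\colon(1-p)M\to M$ directly to obtain $null(x-\lambda)=\Theta(p\mathcal{A})$ for small nonzero $\lambda$. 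To salvage your outline you would first have to prove the commuting-idempotent decomposition, which is at least as hard as the theorem itself.
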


\begin{proof} (1) By [\cite{Berkani-BFredholm}, Theorem 3.1], there exists $\delta >0$ such that $x-\lambda$ is Fredholm, for $0<|\lambda|<\delta$.

(2) Since $x$ is B-redholm, $x^{n}$ is generalized Fredholm for some $n \in \mathbb{N}$ (see [\cite{Berkani-rings}, Theorem 2.9]), in the sense that there exists $y \in \mathcal{A}$ with
$$ x^{n}yx^{n}-x^{n} \in soc(\mathcal{A}) \,\, \makebox{and} \,\, 1-x^{n}y-yx^{n} \in \Phi(\mathcal{A}).$$
By [\cite{Aupetit-Mouton}, Corollary 2.10], $(x^{n}yx^{n}-x^{n})r(x^{n}yx^{n}-x^{n})=x^{n}yx^{n}-x^{n}$ for some $r \in soc(\mathcal{A})$. Set $y_{0}=y-r+yx^{n}r+rx^{n}y+yx^{n}rx^{n}y$.
Then $x^{n}y_{0}x^{n}=x^{n}$ and $\pi(1-x^{n}y_{0}-y_{0}x^{n})=\pi(1-x^{n}y-yx^{n})$, thus $s:=1-x^{n}y_{0}-y_{0}x^{n} \in \Phi(\mathcal{A})$.

{\bf Claim 1:} $R(x) \cap x^{n} \mathcal{A} \subseteq R(s)$. Indeed, for $z \in R(x) \cap x^{n}\mathcal{A} \subseteq  R(x^{n}) \cap x^{n}\mathcal{A}$, we have
$z=(1-y_{0}x^{n})z= x^{n}y_{0}z$, and hence $sz=(1-x^{n}y_{0}-y_{0}x^{n})z=0$. Consequently, $R(x) \cap x^{n} \mathcal{A} \subseteq R(s)$.

Since $x^{n}$ is generalized Fredholm,  $x^{nm}$ is also generalized Fredholm, hence $x^{nm}$ is relatively regular for each $m \in \mathbb{N}$. Keeping in mind the fact we recalled proceeding this theorem, we get $x^{nm} \mathcal{A}$ is closed. Let $M:=\bigcap\limits_{k=1}^{\infty} x^{k} \mathcal{A}$. Clearly, $M=\bigcap\limits_{m=1}^{\infty} x^{nm} \mathcal{A}$ is closed.

{\bf Claim 2:} $xM=M$. Indeed, $xM \subseteq M$ is trivial. Because $\{R(x) \cap x^{m} \mathcal{A} \}_{m=n}^{\infty}$ is a decreasing sequence of right ideals of finite order, we can choose an integer $m \geq n$ such that $R(x) \cap x^{m} \mathcal{A}= R(x) \cap M$ by Lemma \ref{2.2}(2). Let $y \in M$. Then there exists $\{a_{k}\}_{k=1}^{\infty}$ such that $y=x^{m+k}a_{k}$. Set $z_{k}=x^{m}a_{1}-x^{m+k-1}a_{k}$ for all $k \in \mathbb{N}$. Then $xz_{k}=0$ and so
$z_{k} \in R(x) \cap x^{m}\mathcal{A} = R(x) \cap M$. Therefore, $x^{m}a_{1}=z_{k}+x^{m+k-1}a_{k} \in x^{m+k-1}\mathcal{A}$ for all $k \in \mathbb{N}$. Consequently, $y=x(x^{m}a_{1}) \in xM$.

Since $R(x) \cap M \subseteq R(s)$, $R(x) \cap M$ is a right ideal of finite order, and hence we can find some idempotent $p \in soc(\mathcal{A})$ such that
$R(x) \cap M=p\mathcal{A}$. Define $\hat{x}: (1-p)M \rightarrow M$ by $\hat{x}(a)=xa$ for all $a \in (1-p)M$. Then $\hat{x}$ is surjective and
$$ker(\hat{x})=R(x) \cap (1-p)M=R(x) \cap M \cap (1-p)M \subseteq p\mathcal{A} \cap (1-p)\mathcal{A}=\{0\}.$$
That is $\hat{x}: (1-p)M \rightarrow M$ is invertible. Let $\hat{x}^{-1}:  M \rightarrow (1-p)M$ be the inverse of $\hat{x}$ and $j: (1-p)M \rightarrow M$ be the embedding map. Take $\varepsilon = \min\{ \delta, \frac{1}{2}||\hat{x}^{-1}||^{-1} \}$.

{\bf Claim 3:} $null(x-\lambda) = \Theta(p\mathcal{A})$ for $0<|\lambda|<\varepsilon$. Let $y \in R(x-\lambda) \cap (1-p)\mathcal{A}$. Since $R(x-\lambda) \subseteq M$, $y=(1-p)y \in (1-p)M$. This shows that $$R(x-\lambda) \cap (1-p)\mathcal{A}=R(x-\lambda) \cap (1-p)M.$$
Now let $z \in R(x-\lambda) \cap (1-p)M$. Then $||z||=||\hat{x}^{-1}\hat{x}z||\leq ||\hat{x}^{-1}||\cdot||xz||$, and thus $||(x-\lambda)z||\geq (||\hat{x}^{-1}||^{-1}-|\lambda|)||z||\geq \varepsilon ||z||$, which implies $z=0$.
Therefore $$R(x-\lambda) \cap (1-p)\mathcal{A}= \{ 0 \}.$$
As $\mathcal{A}=p\mathcal{A}\oplus(1-p)\mathcal{A}$, we infer by [\cite{Barnes-Fredholm-element}, Lemma 1.2] that
$$null(x-\lambda) \leq \Theta(p\mathcal{A}).$$
Let $m \in p\mathcal{A}=R(x) \cap M$. Then $(x-\lambda)(1-\lambda j \hat{x}^{-1})^{-1}m=xm=0$. Therefore,
$(1-\lambda j \hat{x}^{-1})^{-1}p\mathcal{A} \subseteq R(x-\lambda).$
Since $p\hat{x}^{-1}=0$, we obtain $p(1-\lambda j \hat{x}^{-1})^{-1}p\mathcal{A}=p\mathcal{A}$.
Note that, since $x-\lambda$ is Fredholm, $R(x-\lambda)=p_{\lambda}\mathcal{A}$ for some idempotents $p_{\lambda} \in soc(\mathcal{A})$.
Consequently, $p\mathcal{A} \subseteq pR(x-\lambda)=pp_{\lambda}\mathcal{A}$. By Lemma \ref{2.2}(2) and (3), it follows that
 $$\Theta(p\mathcal{A})\leq \Theta(pp_{\lambda}\mathcal{A})=\Theta(\mathcal{A}pp_{\lambda})\leq \Theta(\mathcal{A}p_{\lambda})=\Theta(p_{\lambda}\mathcal{A})=null(x-\lambda).$$

 (3) The proof is similar to that of (2), we omit it here.
\end{proof}

\section{SVEP for B-Fredhollm elements} \label{sec3}

For the convenience of the reader we recall some notations for bounded linear operators.  Associated with $T \in \mathcal{B}(X)$, some important invariant subspaces (not necessarily closed) of $T$ are the hyperrange $\bigcap\limits_{n=1}^{\infty} ran(T^{n})$ of $T$, the hyperkernel $\bigcup \limits_{n=1}^{\infty} ker(T^{n})$ of $T$, the analytical core of $T$ defined by
$$K(T):=\{x \in X: \makebox{there exist a sequence } \{x_{n}\}_{n=1}^{\infty} \makebox{ in } X \makebox{ and a constant } \delta >0 $$
 $$\makebox{ such that } Tx_{1}=x, Tx_{n+1}=x_{n} \makebox{ and } ||x_{n}||\leq \delta^{n}||x|| \makebox{ for all } n \in \mathbb{N} \},$$
and the quasinilpotent part of $T$ defined by $H_{0}(T):=\{ x \in X: \lim\limits_{n \rightarrow \infty} ||T^{n}x||^{\frac{1}{n}}=0 \}$.
These subspaces were intensively investigated and turned out to have an important role in local spectral theory and Fredholm theory,
see the monograph [\cite{Aiena-book}] by Aiena.

Another important property in local spectral theory is the so called single-valued extension property, which was firstly introduced by Dunford in [\cite{Dunford-1,Dunford-2}].
An operator $T \in \mathcal{B}(X)$ is said to have the single-valued extension property at $\lambda \in \mathbb{C}$
(SVEP at $\lambda$ for the sake of convenience), if for every neighbourhood $U$ of $\lambda$ the only holomorphic function
 $f: U \rightarrow X$ which satisfies the equation $(\mu-T)f(\mu)=0$ on $U$  is the constant function $f \equiv 0$. The localized SVEP at a point was introduced  by Finch in [\cite{Finch}].

 In the following, we introduce the corresponding concepts for Banach algebra elements.

\begin {definition} ${\label{3.1}}$  \begin{upshape}
An element $x \in \mathcal{A}$ is said to have the left single-valued extension property at $\lambda \in \mathbb{C}$
(left SVEP at $\lambda$ for the sake of convenience), if for every neighbourhood $U$ of $\lambda$ the only holomorphic function
 $f: U \rightarrow \mathcal{A}$ which satisfies the equation $(\mu-x)f(\mu)=0$ on $U$  is the constant function $f \equiv 0$.

 Dually, we shall say that $x \in \mathcal{A}$ have the right single-valued extension property at $\lambda \in \mathbb{C}$
(right SVEP at $\lambda$ for the sake of convenience), if for every neighbourhood $U$ of $\lambda$ the only holomorphic function
 $f: U \rightarrow \mathcal{A}$ which satisfies the equation $f(\mu)(\mu-x)=0$ on $U$  is the constant function $f \equiv 0$.

 An element $x \in \mathcal{A}$ is said to have the left (resp. right) SVEP if $x$ has the left (resp. right) SVEP at every $\lambda \in \mathbb{C}$.
 \end{upshape}
\end {definition}

For $x \in \mathcal{A}$, let $L_{x}$ and $R_{x}$ denote the left and right multiplication operators of $x$ on $\mathcal{A}$. That is,
$$L_{x}(a)=xa \makebox{ and } R_{x}(a)=ax, \makebox{ for all } a \in \mathcal{A}.$$

\begin {remark} ${\label{3.1-a}}$  \begin{upshape}
(1) It is clear that $x \in \mathcal{A}$ has the left (resp. right) SVEP at $\lambda$ if and only if $L_{x}$ (resp. $R_{x}$) has SVEP at $\lambda$.

(2) It is worth to mention that $T \in \mathcal{B}(X)$ has SVEP at $\lambda$ if and only if $L_{T}$ has the left SVEP at $\lambda$;
$T^{*}$ has SVEP at $\lambda$ if and only if $R_{T}$ has the left SVEP at $\lambda$. This result is due to G\^{i}ndac [\cite{Gindac}].
\end{upshape}
\end {remark}

We also define the left hyperrange, the left hyperkernel, the left analytical core $K_{l}(x)$ and the left quasinilpotent part $H_{l}(x)$ of $x \in \mathcal{A}$ exactly as the hyperrange, the hyperkernel, the analytical core and the quasinilpotent part of the left multiplication operator $L_{x}$, respectively.
Similarly, the right hyperrange, the right hyperkernel, the right analytical core $K_{r}(x)$ and the right quasinilpotent part $H_{r}(x)$ of $x \in \mathcal{A}$ can be defined exactly as the hyperrange, the hyperkernel, the analytical core and the quasinilpotent part of the right multiplication operator $R_{x}$, respectively.

Recall that the ascent $p(T)$ and the descent of $T \in \mathcal{B}(X)$ are
$$p(T)=\inf\{ n \in \mathbb{N}: ker(T^{n})=ker(T^{n+1})  \}$$
and
$$q(T)=\inf\{ n \in \mathbb{N}: ran(T^{n})=ran(T^{n+1})  \},$$
respectively.  We set $p_{l}(x)=p(L_{x})$, $q_{l}(x)=q(L_{x})$, $p_{r}(x)=p(R_{x})$ and $q_{r}(x)=q(R_{x})$.

\begin {lemma} ${\label{3.2}}$  \begin{upshape}
  Let $x \in B\Phi(\mathcal{A})$.

  $(1)$ If $p_{l}(x)<\infty$, then there exists $\varepsilon >0$ such that $p_{l}(x-\lambda)=0$ for $0<|\lambda|<\varepsilon$.

   $(2)$ If $q_{l}(x)<\infty$, then there exists $\varepsilon >0$ such that $q_{l}(x-\lambda)=0$ for $0<|\lambda|<\varepsilon$.
 \end{upshape}
\end {lemma}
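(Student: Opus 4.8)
The plan is to read both statements off the punctured neighbourhood theorem (Theorem~\ref{2.6}) together with the elementary ``finite ascent kills $ker\cap ran$'' principle, after translating the multiplication-operator quantities $p_l,q_l$ back into the annihilator chains of $x$.

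For~(1), I first note that $p_l(x-\lambda)=0$ says exactly that $L_{x-\lambda}$ is injective, i.e. $R(x-\lambda)=\{0\}$, i.e. $null(x-\lambda)=0$ (for $x-\lambda$ Fredholm the order $\Theta(R(x-\lambda))$ is finite, so it vanishes iff the annihilator is trivial). Choosing $\varepsilon>0$ as in Theorem~\ref{2.6}, for $0<|\lambda|<\varepsilon$ the element $x-\lambda$ is Fredholm and $null(x-\lambda)=\Theta(R(x)\cap x^m\mathcal{A})$ for all large $m$. Since $R(x)=ker\,L_x$ and $x^m\mathcal{A}=ran\,L_x^m$, it suffices to prove $R(x)\cap x^m\mathcal{A}=\{0\}$ for large $m$. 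This is the ascent argument: with $p:=p_l(x)<\infty$ the chain $R(x^k)=ker\,L_x^k$ is constant for $k\ge p$, so if $y=x^m z\in R(x)\cap x^m\mathcal{A}$ with $m\ge p$, then $xy=0$ forces $x^{m+1}z=0$, hence $z\in R(x^{m+1})=R(x^m)$ and $y=x^m z=0$. Therefore $null(x-\lambda)=0$ and $p_l(x-\lambda)=0$ on the punctured disc.

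For~(2), $q_l(x-\lambda)=0$ means $L_{x-\lambda}$ is onto, i.e. $(x-\lambda)\mathcal{A}=\mathcal{A}$. The bridge I would establish is that for a Fredholm element $a$ this is equivalent to $def(a)=0$: being Fredholm, $a$ is relatively regular, say $aba=a$ with $p:=ab$ idempotent and $a\mathcal{A}=p\mathcal{A}$; a one-line check gives $L(a)=\mathcal{A}(1-p)$, so by Lemma~\ref{2.2}(3), $def(a)=\Theta(L(a))=\Theta(\mathcal{A}(1-p))=\Theta((1-p)\mathcal{A})$. This order is finite, hence is $0$ iff $(1-p)\mathcal{A}=\{0\}$, i.e. $p=1$, i.e. $a\mathcal{A}=\mathcal{A}$. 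Feeding in Theorem~\ref{2.6}(3), namely $def(x-\lambda)=\Theta(L(x)\cap\mathcal{A}x^m)$ for large $m$, the goal becomes $L(x)\cap\mathcal{A}x^m=\{0\}$ for large $m$.

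The crux is that this last condition is an ascent condition for $R_x$ (since $L(x)=ker\,R_x$ and $\mathcal{A}x^m=ran\,R_x^m$), whereas the hypothesis $q_l(x)<\infty$ concerns the descent of $L_x$; so the one genuinely two-sided step is transferring the hypothesis across. I would do this by the elementary remark that finite descent of $L_x$ forces finite ascent of $R_x$: if $x^q\mathcal{A}=x^{q+1}\mathcal{A}$ then $x^q=x^{q+1}c$ for some $c$, whence $ax^{q+1}=0\Rightarrow ax^q=(ax^{q+1})c=0$, i.e. $L(x^{q+1})\subseteq L(x^q)$ and $p_r(x)\le q_l(x)<\infty$. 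The ascent argument of~(1), now run for $R_x$, then gives $L(x)\cap\mathcal{A}x^m=\{0\}$ for $m\ge p_r(x)$, so $def(x-\lambda)=0$ and $q_l(x-\lambda)=0$. I expect the only delicate points to be this cross-side transfer $q_l(x)<\infty\Rightarrow p_r(x)<\infty$ and the identification $def(a)=0\Leftrightarrow a\mathcal{A}=\mathcal{A}$ for Fredholm $a$; once both are isolated, each part is a two-line deduction from Theorem~\ref{2.6}.
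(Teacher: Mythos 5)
Your proposal is correct and follows essentially the same route as the paper: both parts read the conclusion off Theorem~\ref{2.6} by showing $R(x)\cap x^{m}\mathcal{A}=\{0\}$ (via stabilization of the chain $R(x^{k})$) and $L(x)\cap\mathcal{A}x^{m}=\{0\}$ (via the same transfer $x^{q}\mathcal{A}=x^{q+1}\mathcal{A}\Rightarrow L(x^{q+1})=L(x^{q})$ that the paper uses inline), and then convert $def(x-\lambda)=0$ into surjectivity through relative regularity of the Fredholm element $x-\lambda$, exactly as in the paper's argument with the idempotent $p_{\lambda}=(x-\lambda)y_{\lambda}$. The only cosmetic difference is that you argue elementwise where the paper invokes the module isomorphisms $R(x^{m+1})/R(x^{m})\simeq R(x)\cap x^{m}\mathcal{A}$ and $L(x)\cap\mathcal{A}x^{m}\simeq L(x^{m+1})/L(x^{m})$.
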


\begin{proof} (1) By Theorem \ref{2.6}(2), there exists $\varepsilon >0$ such that $null(x-\lambda)=\Theta(R(x)\cap x^{m}\mathcal{A})$
for sufficiently large $m \in \mathbb{N}$ and $0<|\lambda|<\varepsilon$.
Since $p_{l}(x)<\infty$, we get $R(x^{m})=R(x^{m+1})$ when $m \geq p_{l}(x)$.
Because $\frac{R(x^{m+1})}{R(x^{m})} \simeq R(x)\cap x^{m}\mathcal{A}$, we derive that $null(x-\lambda)=0$, which is equivalent to $p_{l}(x-\lambda)=0$.

(2) Theorem \ref{2.6}(3) ensures that there is $\varepsilon >0$ such that for sufficiently large $m \in \mathbb{N}$ and $0<|\lambda|<\varepsilon$,  $def(x-\lambda)=\Theta(L(x)\cap \mathcal{A}x^{m})$. In additional, as $q_{l}(x)<\infty$, we can find $m \geq q_{l}(x)$ such that $x^{m}\mathcal{A}=x^{m+1}\mathcal{A}$. If $a \in L(x^{m+1})$, then $ax^{m}=ax^{m+1}c=0$ for some $c \in \mathcal{A}$, thus $a \in L(x^{m})$. Therefore, $L(x^{m+1})=L(x^{m})$. From the fact $L(x)\cap \mathcal{A}x^{m} \simeq \frac{L(x^{m+1})}{L(x^{m})}$, it follows that $def(x-\lambda)=0$, i.e., $L(x-\lambda) =\{ 0\}$. Now $x-\lambda$ is Fredholm, hence $(x-\lambda)y_{\lambda}(x-\lambda)=x-\lambda$ for some $y_{\lambda} \in \mathcal{A}$. Let $p_{\lambda}=(x-\lambda)y_{\lambda}$. Then we have $\mathcal{A}(1-p_{\lambda})=L(x-\lambda)=\{0\}$, and therefore $p_{\lambda}=1$.
Hence $\mathcal{A}=p_{\lambda}\mathcal{A}\subseteq (x-\lambda)\mathcal{A}\subseteq \mathcal{A}.$ Consequently, $(x-\lambda)\mathcal{A}=\mathcal{A}$, which is equivalent to $q_{l}(x-\lambda)=0$.
\end{proof}

By the classical Baire category theorem, it follows that the finiteness of $p_{l}(x)$ (resp. $p_{r}(x)$) is equivalent to the closeness of $\bigcup \limits_{n=1}^{\infty} R(x^{n})$ (resp. $\bigcup \limits_{n=1}^{\infty} L(x^{n})$). The next result shows that the finiteness of $p_{l}(x)$ for B-Fredholm elements may be also characterized by various ways including in particular the left SVEP at $0$, the closeness of the left quasinilpotent part $H_{l}(x)$, and the accumulation points of the left spectrum $\sigma_{l}(x)$.

\begin {theorem} ${\label{3.3}}$  \begin{upshape}
  Let $\lambda_{0} \in \mathbb{C}$ and $x-\lambda_{0} \in B\Phi(\mathcal{A})$. Then the following assertions are equivalent:

  $(1)$ $x$ has left SVEP at $\lambda_{0}$;

  $(2)$ $p_{l}(x-\lambda_{0}) < \infty$;

  $(3)$ $q_{r}(x-\lambda_{0}) < \infty$;

  $(4)$ $\sigma_{l}(x)$ does not cluster at $\lambda_{0}$;

  $(5)$ $\lambda_{0}$ is not an interior point of $\sigma_{l}(x)$;

  $(6)$ $H_{l}(x-\lambda_{0})=R((x-\lambda_{0})^{p})$ for some $p \in \mathbb{N}$;

   $(7)$ $H_{l}(x-\lambda_{0})$ is closed;

   $(8)$ $H_{l}(x-\lambda_{0}) \cap K_{l}(x-\lambda_{0})=\{0\}$;

   $(9)$ $H_{l}(x-\lambda_{0}) \cap K_{l}(x-\lambda_{0})$ is closed;

   $(10)$ $\bigcup \limits_{n=1}^{\infty} R((x-\lambda_{0})^{n}) \cap \bigcap \limits_{n=1}^{\infty}(x-\lambda_{0})^{n} \mathcal{A} =\{0\}$.

   In this case, if $p:=p_{l}(x-\lambda_{0})$, then
   $$H_{l}(x-\lambda_{0})=\bigcup \limits_{n=1}^{\infty} R((x-\lambda_{0})^{n})=R((x-\lambda_{0})^{p}).$$
 \end{upshape}
\end {theorem}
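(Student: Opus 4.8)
The plan is to funnel all ten conditions through a single scalar, namely the constant $\alpha := \Theta\big(R(x-\lambda_{0})\cap (x-\lambda_{0})^{m}\mathcal{A}\big)$ produced by Theorem \ref{2.6}(2) for large $m$, and to show each of $(1)$--$(10)$ is equivalent to $\alpha=0$. Replacing $x$ by $x-\lambda_{0}$ (again B-Fredholm), I assume $\lambda_{0}=0$. Writing $M:=\bigcap_{n}x^{n}\mathcal{A}$ for the left hyperrange, the proof of Theorem \ref{2.6} already records that $M$ is closed, that $xM=M$, and that $R(x)\cap x^{m}\mathcal{A}=R(x)\cap M$ for all large $m$; hence $\alpha=\Theta(R(x)\cap M)$, and since $x$ maps $M$ onto $M$ we have $\alpha=0$ exactly when $L_{x}|_{M}$ is bijective. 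Throughout I identify the left-hand notions with operator data of $L_{x}$ via Remark \ref{3.1-a}: the kernels of $L_{x}^{n}$ are the annihilators $R(x^{n})$, the ranges are $x^{n}\mathcal{A}$, $H_{l}(x)=H_{0}(L_{x})$, $K_{l}(x)=K(L_{x})$, and left SVEP at $0$ is SVEP of $L_{x}$ at $0$.

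First I would settle the ``ascent'' cluster. Using the isomorphism $R(x^{m+1})/R(x^{m})\cong R(x)\cap x^{m}\mathcal{A}$ recorded in the proof of Lemma \ref{3.2}, finiteness of $p_{l}(x)=p(L_{x})$ is equivalent to $R(x)\cap x^{m}\mathcal{A}=\{0\}$ for large $m$, i.e. to $\alpha=0$; this is $(2)\Leftrightarrow\alpha=0$. For $(4)$ and $(5)$, Theorem \ref{2.6} makes $x-\lambda$ Fredholm with $null(x-\lambda)=\alpha$ constant on a punctured disc; since a Fredholm element is relatively regular (as already used in the proof of Lemma \ref{3.2}) and such an element is left invertible precisely when its nullity vanishes, the punctured disc meets $\sigma_{l}(x)$ either in the empty set (when $\alpha=0$) or in the whole punctured disc (when $\alpha>0$). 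Thus $\sigma_{l}(x)$ clusters at $0$ iff $0$ is interior to $\sigma_{l}(x)$ iff $\alpha>0$, giving $(4)\Leftrightarrow(5)\Leftrightarrow\alpha=0$. For $(1)\Leftrightarrow(2)$ the explicit construction inside the proof of Theorem \ref{2.6} pays off: finite ascent always forces SVEP, while if $\alpha>0$ one picks $0\neq m_{0}\in R(x)\cap M$ and sets $f(\lambda)=(1-\lambda j\hat{x}^{-1})^{-1}m_{0}$, which is holomorphic near $0$, satisfies $(x-\lambda)f(\lambda)=xm_{0}=0$, and has $f(0)=m_{0}\neq0$, contradicting left SVEP at $0$.

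For condition $(3)$ I would avoid proving $p_{l}(x)=q_{r}(x)$ head-on and instead route through $R_{x}$. The arguments establishing the structure of $L_{x}$ apply verbatim in the opposite algebra, so $R_{x}$ likewise has topological uniform descent; moreover one has the exact identity $\sigma_{su}(R_{x})=\sigma_{l}(x)$, because $R_{x-\lambda}$ is surjective iff $\mathcal{A}(x-\lambda)=\mathcal{A}$ iff $x-\lambda$ is left invertible. The descent form of the punctured neighbourhood theorem for operators of topological uniform descent (see [\cite{Aiena-book}]) then gives $q_{r}(x)=q(R_{x})<\infty$ iff $\sigma_{su}(R_{x})=\sigma_{l}(x)$ does not cluster at $0$, which is exactly $(4)$. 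Hence $(3)\Leftrightarrow(4)\Leftrightarrow\alpha=0$.

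Finally I would treat the subspace conditions $(6)$--$(10)$ and the closing formula. Condition $(10)$ is immediate by hand: when $\alpha=0$ the map $L_{x}|_{M}$ is injective, so any $z\in R(x^{n})\cap M$ has $x^{n}z=0$ with $z\in M$, forcing $z=0$; conversely if $\alpha>0$ then $\{0\}\neq R(x)\cap M\subseteq\bigcup_{n}R(x^{n})\cap M$. For $(6)$--$(9)$ I would invoke the local spectral theory of the topological-uniform-descent operator $L_{x}$: its analytical core satisfies $K(L_{x})=M=\bigcap_{n}x^{n}\mathcal{A}$, and when $p:=p_{l}(x)<\infty$ one has $H_{0}(L_{x})=R(x^{p})$, whence $(6)$; this $H_{0}(L_{x})$ is then closed (a kernel), giving $(7)$, and $H_{0}(L_{x})\cap K(L_{x})=R(x^{p})\cap M=\{0\}$ is closed, giving $(8)$ and $(9)$, the reverse implications being the corresponding standard facts for topological uniform descent in [\cite{Aiena-book}]. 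The displayed identity $H_{l}(x)=\bigcup_{n}R(x^{n})=R(x^{p})$ then follows, since finite ascent $p$ collapses the increasing chain $\{R(x^{n})\}$ at level $p$, and the union equals $H_{0}(L_{x})$ by $(6)$. The main obstacle I anticipate is precisely this operator--element translation for $(6)$--$(9)$: establishing $K_{l}(x)=M$ and $H_{l}(x)=R(x^{p})$, and verifying carefully that $L_{x}$ and $R_{x}$ genuinely enjoy topological uniform descent (so that Aiena's equivalences and the surjectivity punctured neighbourhood theorem legitimately apply), rather than merely the ascent/descent stabilisation that is directly visible from Theorem \ref{2.6}.
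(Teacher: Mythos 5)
Most of your outline tracks the paper's proof closely: the reduction to the single constant $\alpha=\Theta(R(x)\cap M)$ supplied by Theorem \ref{2.6}, the function $(1-\lambda j\hat{x}^{-1})^{-1}m_{0}$ violating left SVEP when $\alpha>0$ (the paper writes exactly this function as an explicit power series $a+\sum_{n}u^{n}a_{n}$), the isomorphism $R(x^{m+1})/R(x^{m})\cong R(x)\cap x^{m}\mathcal{A}$ for the ascent cluster, the punctured-disc dichotomy for $(4)$ and $(5)$, the identification $K_{l}(x)=M$, and your direct handling of $(10)$ are all sound and essentially what the paper does. The genuine gap is in your treatment of $(3)$ and, to a lesser extent, $(6)$: both rest on the assertion that $L_{x}$ and $R_{x}$ are operators of topological uniform descent, which you never establish and which is not automatic. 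B-Fredholmness of $x$ in $\mathcal{A}$ means Drazin invertibility modulo $soc(\mathcal{A})$; it does not make $L_{x}$ a B-Fredholm or quasi-Fredholm operator on the Banach space $\mathcal{A}$, because left multiplication by a socle element is in general not of finite rank as an operator (take $\mathcal{A}=\mathcal{B}(H)$ and a rank-one projection $e$: the range of $L_{e}$ is $e\mathcal{B}(H)$, which is infinite dimensional). Verifying topological uniform descent for $L_{x}$ would require not only the stabilisation of $R(x)\cap x^{n}\mathcal{A}$, which Theorem \ref{2.6} does provide, but also the closedness conditions on $x^{n}\mathcal{A}+R(x)$ in Grabiner's sense, and you supply neither; you flag this yourself as the main obstacle, and it is exactly where the proof is missing.

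The paper avoids this machinery entirely. For $(2)\Leftrightarrow(3)$ it gives a short algebraic argument: if $q_{r}(x)=n<\infty$ then $x^{n}=ax^{n+1}$ yields $R(x^{n+1})\subseteq R(x^{n})$, so $p_{l}(x)\leq n$; conversely, for large $m\geq p_{l}(x)$ the powers $x^{m}$ and $x^{2m}$ are relatively regular, so $\mathcal{A}x^{m}=\mathcal{A}p$ and $\mathcal{A}x^{2m}=\mathcal{A}q$ for idempotents $p,q$, and $R(x^{m})=R(x^{2m})$ gives $(1-p)\mathcal{A}=(1-q)\mathcal{A}$, hence $p=pq$ and $\mathcal{A}x^{m}\subseteq\mathcal{A}x^{2m}$, i.e.\ $q_{r}(x)\leq m$. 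For $(2)\Rightarrow(6)$ it uses only that $x^{n}\mathcal{A}$ is closed for all $n\geq p_{l}(x)$ (Mbekhta--M\"{u}ller) together with $\overline{H_{l}(x)}=\overline{\bigcup_{n}R(x^{n})}$ from Bel Hadj Fredj--Burgos--Oudghiri; the implications $(7)\Rightarrow(8)\Leftrightarrow(9)$ and $(10)\Rightarrow(1)$ are then general facts about arbitrary bounded operators applied to $L_{x}$, not facts about topological uniform descent. If you replace your two appeals to the topological-uniform-descent theory by these elementary arguments, your proof closes.
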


\begin{proof} Without loss of generality, we assume that $\lambda_{0}=0$.

$(1) \Longrightarrow (2)$ Suppose that $p_{l}(x) = \infty$. The B-Fredholmness of $x$ implies that
$$\makebox{ the left hyperrange } M:=\bigcap\limits_{n=1}^{\infty}x^{n}\mathcal{A} \makebox{ is closed, }xM=M$$
and there exists a sufficiently large $m \in \mathbb{N}$ such that
$$R(x)\cap x^{m}\mathcal{A} = R(x)\cap  M.$$
Now the infiniteness of $p_{l}(x)$ implies that there is a nonzero $a \in R(x)\cap  M$.
By the open mapping theorem, we can find a constant $\alpha>0$ and a sequence $\{a_{n}\}_{n=1}^{\infty}$ in $M$ such that
$xa_{1}=a, xa_{n+1}=a_{n}, \makebox{ and } ||a_{n}||\leq \alpha^{n} ||a||.$ Let $U=\{u \in \mathbb{C}: |u| < \frac{1}{\alpha}\}$ and we define $f:U \longrightarrow \mathcal{A}$ by $f(u)=a+\sum\limits_{n=1}^{\infty}u^{n}a_{n}$ for $u \in U.$ Clearly, $f$ is a holomorphic function on $U$ and
$(u-x)f(u)=-xa=0,$ but $f \not\equiv 0$. This contradicts our assumption that $x$ has left SVEP at $0$.

$(2) \Longrightarrow (4)$ Since $p_{l}(x) < \infty$, by Lemma \ref{3.2}(1), there exists $\varepsilon >0$ such that $p_{l}(x-\lambda)=0$ for $0<|\lambda|<\varepsilon$. But $x-\lambda$ is Fredholm, so $x-\lambda$ relatively regular, and thus $x-\lambda$ is left invertible.
Therefore, $0$ is not a limit point of $\sigma_{l}(x)$.

$(4) \Longrightarrow (5)$ It is obvious.

$(5) \Longrightarrow (1)$  It is an immediate consequence of the identity theorem for analytic functions.

$(2) \Longleftrightarrow (3)$ Suppose first that $n=q_{r}(x)<\infty$. Then $\mathcal{A}x^{n}=\mathcal{A}x^{n+1}$, so $x^{n}=ax^{n+1}$ for some
$a \in \mathcal{A}$. For $b \in R(x^{n+1})$, we have $x^{n}b=ax^{n+1}b=0$, thus $b \in R(x^{n})$. This shows that $R(x^{n+1}) \subseteq R(x^{n})$,
therefore $p_{l}(x) \leq n$.

Conversely, suppose that $p_{l}(x) < \infty$. The B-Fredholmness of $x$ implies that $x^{m}$ and $x^{2m}$ are relatively regular for a sufficiently large integer $m \geq p_{l}(x)$. Now we have $R(x^{m})=R(x^{2m})$, $\mathcal{A}x^{m}=\mathcal{A}p$ and $\mathcal{A}x^{2m}=\mathcal{A}q$ for some idempotents $p,q \in \mathcal{A}$. Hence $(1-p)\mathcal{A}=R(x^{m})=R(x^{2m})=(1-q)\mathcal{A}$, so $(1-q)=(1-p)(1-q)$, and thus $p=pq$.
Consequently, $\mathcal{A}x^{m}=\mathcal{A}p=\mathcal{A}pq \subseteq \mathcal{A}q=\mathcal{A}x^{2m}$. This shows that $q_{r}(x)\leq m <\infty$.

$(2) \Longrightarrow (6)$  The B-Fredholmness of $x$ implies that $x^{m}\mathcal{A}$ is closed for a sufficiently large $m \in \mathbb{N}$.
As $p_{l}(x) < \infty$, by [\cite{Mbekhta-Muller-axiomatic}, Lemma 7] we know that $x^{n}\mathcal{A}$ is closed for all $n \geq p_{l}(x)$.
Hence by [\cite{Bel-Burgos-Oudghiri}, Proposition 4.1], $\overline{H_{l}(x)}=\overline{\bigcup \limits_{n=1}^{\infty} R(x^{n})}$.
Let $p=p_{l}(x)$. Then $H_{l}(x) \subseteq \overline{H_{l}(x)}=\overline{\bigcup \limits_{n=1}^{\infty} R(x^{n})}=R(x^{p}) \subseteq H_{l}(x)$.
Therefore, $H_{l}(x)=R(x^{p})$.

$(6) \Longrightarrow (7)$ It is obvious.

$(7) \Longrightarrow (8)$ and $(8) \Longleftrightarrow (9)$ It follows from [\cite{Aiena-book}, Theorem 2.31] by considering the left multiplication operator $L_{x}$.

$(8) \Longrightarrow (10)$ Clearly, $\bigcap\limits_{n=1}^{\infty}x^{n}\mathcal{A} \subseteq K_{l}(x)$. Since $M:=\bigcap\limits_{n=1}^{\infty}x^{n}\mathcal{A}$ is closed and $xM=M$, we get $\bigcap\limits_{n=1}^{\infty}x^{n}\mathcal{A} \subseteq K_{l}(x)$ by the open mapping theorem. Hence $\bigcap\limits_{n=1}^{\infty}x^{n}\mathcal{A} = K_{l}(x)$. Consequently,
$\bigcup \limits_{n=1}^{\infty} R(x^{n}) \cap \bigcap \limits_{n=1}^{\infty}x^{n} \mathcal{A}\subseteq H_{l}(x) \cap K_{l}(x) =\{0\}$.

$(10) \Longrightarrow (1)$ It follows from [\cite{Aiena-book}, Corollary 2.26] by considering the left multiplication operator $L_{x}$.
\end{proof}

Dually, the right SVEP at $0$ for B-Fredholm elements can be characterized by various ways including in particular, the finiteness of $q_{l}(x)$, the closeness of the right quasinilpotent part $H_{r}(x)$, and the accumulation points of the right spectrum $\sigma_{r}(x)$.

\begin {theorem} ${\label{3.4}}$  \begin{upshape}
  Let $\lambda_{0} \in \mathbb{C}$ and $x-\lambda_{0} \in B\Phi(\mathcal{A})$. Then the following assertions are equivalent:

  $(1)$ $x$ has right SVEP at $\lambda_{0}$;

  $(2)$ $p_{r}(x-\lambda_{0}) < \infty$;

  $(3)$ $q_{l}(x-\lambda_{0}) < \infty$;

  $(4)$ $\sigma_{r}(x)$ does not cluster at $\lambda_{0}$;

  $(5)$ $\lambda_{0}$ is not an interior point of $\sigma_{r}(x)$;

  $(6)$ $H_{r}(x-\lambda_{0})=L((x-\lambda_{0})^{p})$ for some $p \in \mathbb{N}$;

   $(7)$ $H_{r}(x-\lambda_{0})$ is closed;

   $(8)$ $H_{r}(x-\lambda_{0}) \cap K_{r}(x-\lambda_{0})=\{0\}$;

   $(9)$ $H_{r}(x-\lambda_{0}) \cap K_{r}(x-\lambda_{0})$ is closed;

   $(10)$ $\bigcup \limits_{n=1}^{\infty} L((x-\lambda_{0})^{n}) \cap \bigcap \limits_{n=1}^{\infty}\mathcal{A}(x-\lambda_{0})^{n} =\{0\}$.

   In this case, if $p:=p_{r}(x-\lambda_{0})$, then
   $$H_{r}(x-\lambda_{0})=\bigcup \limits_{n=1}^{\infty} L((x-\lambda_{0})^{n})=L((x-\lambda_{0})^{p}).$$
 \end{upshape}
\end {theorem}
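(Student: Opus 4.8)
The plan is to deduce Theorem \ref{3.4} from Theorem \ref{3.3} by passing to the opposite algebra, thereby converting every right-handed statement into the corresponding left-handed one that has already been settled. Concretely, I would equip the underlying Banach space $\mathcal{A}$ with the reversed product $a\ast b:=ba$ and write $\mathcal{A}^{\mathrm{op}}$ for the resulting algebra. First I would check that $\mathcal{A}^{\mathrm{op}}$ again satisfies the standing hypotheses: it is unital with the same identity; it is a Banach algebra since $\|a\ast b\|=\|ba\|\le\|a\|\,\|b\|$; and it is semiprime because a two-sided ideal $J$ of $\mathcal{A}^{\mathrm{op}}$ is a two-sided ideal of $\mathcal{A}$ with $J\ast J=J^{2}$. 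Minimal right ideals of $\mathcal{A}$ are exactly the minimal left ideals of $\mathcal{A}^{\mathrm{op}}$ and conversely, so $soc(\mathcal{A}^{\mathrm{op}})=soc(\mathcal{A})$, and in particular $soc(\mathcal{A}^{\mathrm{op}})\neq\{0\}$.

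Next I would assemble the dictionary between the two algebras. Since powers are unchanged ($x^{\ast n}=x^{n}$), one has $L_{x}^{\mathrm{op}}=R_{x}$ and $R_{x}^{\mathrm{op}}=L_{x}$; hence $p_{l}^{\mathrm{op}}(x)=p_{r}(x)$, $q_{r}^{\mathrm{op}}(x)=q_{l}(x)$, $H_{l}^{\mathrm{op}}(x)=H_{r}(x)$ and $K_{l}^{\mathrm{op}}(x)=K_{r}(x)$. The annihilators swap, $R^{\mathrm{op}}(y)=L(y)$ and $L^{\mathrm{op}}(y)=R(y)$, while the one-sided ideals satisfy $y\ast\mathcal{A}=\mathcal{A}y$ and $\mathcal{A}\ast y=y\mathcal{A}$. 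Left invertibility in $\mathcal{A}^{\mathrm{op}}$ is right invertibility in $\mathcal{A}$, so $\sigma_{l}^{\mathrm{op}}(x)=\sigma_{r}(x)$. Finally, by Remark \ref{3.1-a}, $x$ has left SVEP in $\mathcal{A}^{\mathrm{op}}$ at $\lambda_{0}$ precisely when $x$ has right SVEP in $\mathcal{A}$ at $\lambda_{0}$, because the governing equation $(\mu-x)\ast f(\mu)=f(\mu)(\mu-x)=0$ is exactly the right-SVEP equation in $\mathcal{A}$.

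I would then verify that B-Fredholmness is preserved. One has $\mathcal{A}^{\mathrm{op}}/soc(\mathcal{A}^{\mathrm{op}})=(\mathcal{A}/soc(\mathcal{A}))^{\mathrm{op}}$, and Drazin invertibility is symmetric under reversing the product: using the commutativity $ab=ba$, the defining relations $bab=b$, $ab=ba$, $a^{k}ba=a^{k}$ are invariant under $a\ast b=ba$. Hence $x-\lambda_{0}\in B\Phi(\mathcal{A})$ forces $x-\lambda_{0}\in B\Phi(\mathcal{A}^{\mathrm{op}})$, so Theorem \ref{3.3} applies verbatim to $x$ inside $\mathcal{A}^{\mathrm{op}}$. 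Translating each of its ten equivalent conditions through the dictionary above yields exactly conditions $(1)$--$(10)$ of Theorem \ref{3.4}, and the final displayed identity $H_{l}^{\mathrm{op}}(x-\lambda_{0})=\bigcup_{n}R^{\mathrm{op}}((x-\lambda_{0})^{n})=R^{\mathrm{op}}((x-\lambda_{0})^{p})$ becomes the asserted formula for $H_{r}(x-\lambda_{0})$.

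The routine part is checking that the hypotheses transfer and that the dictionary is complete; the one point that genuinely needs care, and which I expect to be the main obstacle, is confirming that the quasinilpotent part, the analytical core, the two spectra, and especially the Drazin-invertibility condition behind B-Fredholmness are all symmetric under the passage to $\mathcal{A}^{\mathrm{op}}$, since a single mismatched correspondence (for instance identifying $\sigma_{l}^{\mathrm{op}}$ with $\sigma_{l}$ rather than $\sigma_{r}$) would derail the whole translation. As an alternative to this opposite-algebra device, one can instead mirror the proof of Theorem \ref{3.3} line by line, replacing Theorem \ref{2.6}(2) and Lemma \ref{3.2}(1) by Theorem \ref{2.6}(3) and Lemma \ref{3.2}(2) and swapping kernels and ranges for their annihilator counterparts throughout.
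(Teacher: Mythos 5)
Your proposal is correct, and your primary route is genuinely different from what the paper does: the paper simply declares the proof ``similar to that of Theorem \ref{3.3}'' and omits it, i.e.\ it intends the line-by-line mirroring that you relegate to your final sentence (replacing Theorem \ref{2.6}(2) by \ref{2.6}(3), Lemma \ref{3.2}(1) by \ref{3.2}(2), $R(\cdot)$ by $L(\cdot)$, and so on). Your opposite-algebra device packages that duality once and for all, and your dictionary is complete and accurate on the points that matter: $\mathcal{A}^{\mathrm{op}}$ inherits unitality, the norm, semiprimeness, and (since minimal idempotents and hence minimal one-sided ideals merely swap sides) the same nonzero socle, so the standing hypotheses of the paper hold; $L_{x}^{\mathrm{op}}=R_{x}$ identifies all the derived objects ($p_{l}^{\mathrm{op}}=p_{r}$, $H_{l}^{\mathrm{op}}=H_{r}$, $K_{l}^{\mathrm{op}}=K_{r}$, $R^{\mathrm{op}}(y)=L(y)$, $y\ast\mathcal{A}=\mathcal{A}y$, $\sigma_{l}^{\mathrm{op}}=\sigma_{r}$); and your verification that the Drazin relations $bab=b$, $ab=ba$, $a^{k}ba=a^{k}$ are self-dual under product reversal (the only slightly nonobvious point, settled by commutativity of $a$ and $b$) gives $B\Phi(\mathcal{A}^{\mathrm{op}})=B\Phi(\mathcal{A})$, so Theorem \ref{3.3} applies verbatim in $\mathcal{A}^{\mathrm{op}}$. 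What your approach buys is a genuine proof rather than an appeal to analogy: every auxiliary result cited in the proof of Theorem \ref{3.3} is a statement about an arbitrary semiprime unital Banach algebra or about the bounded operator $L_{x}$ on the underlying Banach space, so it transfers automatically, whereas the ``similar proof'' route obliges the reader to re-derive right-handed versions of Claims 1--3 of Theorem \ref{2.6} and of Lemma \ref{3.2}. The only cost is the bookkeeping you already flag as the main obstacle, and your dictionary resolves it correctly.
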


\begin{proof}
The proof is similar to that of Theorem \ref{3.3}, we omit it here.
\end{proof}

\section{Classification of components of B-Fredholm resolvent set} \label{sec4}

Recall that an element $a \in \mathcal{A}$ is called a left (resp. right) topological divisor of zero if there exists a sequence $\{a_{n}\}_{n=1}^{\infty}$ in $\mathcal{A}$ such that $||a_{n}||=1$ for all $n$ and $aa_{n}\rightarrow 0$ (resp. $a_{n}a\rightarrow 0$). An element which is either a left or right topological divisor of zero is called a topological divisor of zero. If there exists a sequence $\{a_{n}\}_{n=1}^{\infty}$ in $\mathcal{A}$, each  $a_{n}$ of norm one, such that $aa_{n}\rightarrow 0$ and $a_{n}a\rightarrow 0$, then we call $a \in \mathcal{A}$ is a two-sided topological divisor of zero. It is clear that if $a$ is left (resp. right) invertible then $a$ is not a left (resp. right) topological divisor of zero.

For $x \in \mathcal{A}$, the B-Fredholm spectrum $\sigma_{BF}(x)$ of $x$ is defined as those complex numbers $\lambda$ for which $x-\lambda$ is not B-Fredholm.
The B-Fredholm resolvent set of $x$ is then defined as $\rho_{BF}(x)=\mathbb{C} \backslash \sigma_{BF}(x)$. From the characterization of the left SVEP at a point for B-Fredholm elements established in Theorem \ref{3.3}, we now obtain the following classification of components of $\rho_{BF}(x)$.

\begin {theorem} ${\label{4.1}}$  \begin{upshape}
  Let $x \in \mathcal{A}$ and $\Omega$ a component of $\rho_{BF}(x)$. Then the following alternative holds:

  $(1)$ $x$ has the left SVEP for every point of $\Omega$. In this case, $p_{l}(x-\lambda)<\infty$ for all $\lambda \in \Omega$.
   Moreover, $\sigma_{l}(x)$ does not have limit points in $\Omega$; $x-\lambda$ is not a left topological divisor of zero for every point $\lambda$ in $\Omega$, except at most countably many isolated points in $\Omega$.

  $(2)$ $x$ has the left SVEP at no point of $\Omega$. In this case, $p_{l}(x-\lambda)=\infty$ for all $\lambda \in \Omega$.
  $x-\lambda$ is a left topological divisor of zero for every point $\lambda$ in $\Omega$.
 \end{upshape}
\end {theorem}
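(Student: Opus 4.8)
The plan is to split $\Omega$ according to where the left SVEP holds and to show each piece is relatively open, so that connectedness of $\Omega$ forces one piece to be empty. First I would note that $\rho_{BF}(x)$ is open: for $\lambda_{0}\in\rho_{BF}(x)$ the element $x-\lambda_{0}$ is B-Fredholm, so Theorem \ref{2.6}(1) applied to $x-\lambda_{0}$ makes $x-\lambda$ Fredholm (hence B-Fredholm) throughout a punctured disc about $\lambda_{0}$, and together with $\lambda_{0}$ itself this places a full disc inside $\rho_{BF}(x)$. Consequently $\Omega$ is an open connected subset of $\mathbb{C}$, and $x-\lambda\in B\Phi(\mathcal{A})$ for every $\lambda\in\Omega$, so the equivalences of Theorem \ref{3.3} are available at each point of $\Omega$.

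Put $\Omega_{1}=\{\lambda\in\Omega: x \text{ has the left SVEP at }\lambda\}$ and $\Omega_{2}=\Omega\setminus\Omega_{1}$. For openness of $\Omega_{1}$, take $\lambda_{0}\in\Omega_{1}$; then $p_{l}(x-\lambda_{0})<\infty$ by Theorem \ref{3.3}, and Lemma \ref{3.2}(1) applied to the B-Fredholm element $x-\lambda_{0}$ gives $\varepsilon>0$ (shrunk so that $B(\lambda_{0},\varepsilon)\subseteq\Omega$) with $p_{l}(x-\lambda)=0$ for $0<|\lambda-\lambda_{0}|<\varepsilon$; hence $p_{l}(x-\lambda)<\infty$, and therefore the left SVEP holds, at every point of $B(\lambda_{0},\varepsilon)$, so $B(\lambda_{0},\varepsilon)\subseteq\Omega_{1}$. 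For openness of $\Omega_{2}$ I would invoke characterization (5) of Theorem \ref{3.3}: failure of the left SVEP at $\lambda_{0}$ means $\lambda_{0}$ is an interior point of $\sigma_{l}(x)$, so some disc $B(\lambda_{0},\varepsilon)\subseteq\sigma_{l}(x)$; every point of this (open) disc is then interior to $\sigma_{l}(x)$, whence the left SVEP fails at each of them, giving $B(\lambda_{0},\varepsilon)\cap\Omega\subseteq\Omega_{2}$. As $\Omega=\Omega_{1}\sqcup\Omega_{2}$ is a disjoint union of relatively open sets and $\Omega$ is connected, one of them is empty; this is precisely the stated alternative, and the corresponding statements $p_{l}(x-\lambda)<\infty$ (case (1)) or $p_{l}(x-\lambda)=\infty$ (case (2)) for all $\lambda\in\Omega$ read off from the equivalence $(1)\Leftrightarrow(2)$ of Theorem \ref{3.3}.

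It remains to establish the cluster-point and topological-divisor assertions. In case (1), Theorem \ref{3.3}(4) shows $\sigma_{l}(x)$ does not cluster at any point of $\Omega$, so $\sigma_{l}(x)\cap\Omega$ is a relatively closed discrete subset of the separable open set $\Omega$ and is therefore at most countable, each of its points being isolated in $\Omega$. Off this set, i.e. for $\lambda\in\Omega\setminus\sigma_{l}(x)$, the element $x-\lambda$ is left invertible and hence not a left topological divisor of zero, so the exceptional points form a subset of the countable set $\sigma_{l}(x)\cap\Omega$. In case (2) the left SVEP fails at every $\lambda\in\Omega$, so there is a nonzero holomorphic $f$ on some neighbourhood $U$ of $\lambda$ with $(\mu-x)f(\mu)=0$ on $U$; writing $f(\mu)=(\mu-\lambda)^{k}g(\mu)$ with $g(\lambda)\neq0$ and cancelling $(\mu-\lambda)^{k}$ yields $(x-\lambda)g(\lambda)=0$ with $g(\lambda)\neq0$, so the constant unit sequence $g(\lambda)/\|g(\lambda)\|$ exhibits $x-\lambda$ as a left topological divisor of zero.

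The step I expect to demand the most care is the simultaneous openness of $\Omega_{1}$ and $\Omega_{2}$. Openness of $\Omega_{1}$ is supplied directly by the punctured-neighbourhood Lemma \ref{3.2}, but openness of $\Omega_{2}$ --- equivalently, the assertion that the left SVEP cannot reappear at a limit of points where it fails --- is exactly what the interior-point reformulation (5) of Theorem \ref{3.3} is built to provide, and the crucial observation is that this reformulation hands back a \emph{full} disc in $\sigma_{l}(x)$ rather than a punctured one, so that the failure of SVEP is seen to be an open condition. Once the dichotomy is in place, the remaining cluster-point, countability, and topological-divisor statements are routine consequences of Theorem \ref{3.3} and of left invertibility off $\sigma_{l}(x)$.
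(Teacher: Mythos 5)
Your proof is correct and follows essentially the same route as the paper: split $\Omega$ into the set where the left SVEP holds and the set where it fails, show both are relatively open (the former via Lemma \ref{3.2}(1) and Theorem \ref{3.3}, exactly as in the paper), and conclude by connectedness, with the remaining assertions read off from Theorem \ref{3.3}. The only cosmetic differences are that you certify openness of the failure set through the interior-point criterion (5) of Theorem \ref{3.3} where the paper invokes the identity theorem directly, and in case (2) you extract a kernel vector from the holomorphic function where the paper simply notes that $p_{l}(x-\lambda)=\infty$ forces $R(x-\lambda)\neq\{0\}$.
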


\begin{proof} Let $S_{l}(x)=\{\lambda \in \Omega: x \makebox{ does not have the left SVEP at } \lambda \}$. The identity theorem for analytic functions implies that $S_{l}(x)$ is open. Next we show that $\Omega \backslash S_{l}(x)$ is also open. For this, let $ \lambda \in \Omega \backslash S_{l}(x)$. Then $p_{l}(x-\lambda)<\infty$ by Theorem \ref{3.3}.
Hence by Lemma \ref{3.2}(1) and the openness of $\Omega$, there exists $\varepsilon >0$ such that for all $0< |\mu-\lambda| < \varepsilon$, $p_{l}(x-\mu)=0<\infty$ and $\mu \in \Omega$. Therefore, again by Theorem \ref{3.3}, $x$ has the left SVEP at $\mu$.
This shows that $\mu \in \Omega \backslash S_{l}(x)$ for $|\mu-\lambda| < \varepsilon$. Because $\Omega$ is connected, $S_{l}(x)$ is empty or $S_{l}(x)=\Omega$.
That is, the alternative is established.

In case (1), by Theorem \ref{3.3}, $p_{l}(x-\lambda)<\infty$ for all $\lambda \in \Omega$ and $\sigma_{l}(x)$ does not have limit points in $\Omega$.
Consequently, $x-\lambda$ is left invertible, and thus $x-\lambda$ is not a left topological divisor of zero for every point $\lambda$ in $\Omega$,
except at most countably many isolated points in $\Omega$.

In case (2), again by Theorem \ref{3.3}, $p_{l}(x-\lambda)=\infty$ for all $\lambda \in \Omega$. Therefore, $R(x-\lambda) \neq \{0\}$, so
$x-\lambda$ is a left topological divisor of zero for every point $\lambda$ in $\Omega$.
\end{proof}

The proof of the following result is similar to that above, we omit it here.

\begin {theorem} ${\label{4.2}}$  \begin{upshape}
  Let $x \in \mathcal{A}$ and $\Omega$ a component of $\rho_{BF}(x)$. Then the following alternative holds:

  $(1)$ $x$ has the right SVEP for every point of $\Omega$. In this case, $q_{l}(x-\lambda)<\infty$ for all $\lambda \in \Omega$.
   Moreover, $\sigma_{r}(x)$ does not have limit points in $\Omega$; $x-\lambda$ is not a right topological divisor of zero for every point $\lambda$ in $\Omega$, except at most countably many isolated points in $\Omega$.

  $(2)$ $x$ has the right SVEP at no point of $\Omega$. In this case, $q_{l}(x-\lambda)=\infty$ for all $\lambda \in \Omega$.
  $x-\lambda$ is a right topological divisor of zero for every point $\lambda$ in $\Omega$.
 \end{upshape}
\end {theorem}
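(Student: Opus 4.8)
The plan is to closely mirror the proof of Theorem \ref{4.1}, replacing every left-handed object by its right-handed counterpart: Theorem \ref{3.3} is replaced by Theorem \ref{3.4}, Lemma \ref{3.2}(1) by Lemma \ref{3.2}(2), the left spectrum $\sigma_l(x)$ by the right spectrum $\sigma_r(x)$, and the set $S_l(x)$ by $S_r(x):=\{\lambda\in\Omega: x \text{ does not have the right SVEP at }\lambda\}$. As before, the heart of the argument is to establish the dichotomy by proving that both $S_r(x)$ and $\Omega\setminus S_r(x)$ are open subsets of $\Omega$, so that the connectedness of $\Omega$ forces one of them to be empty.

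First I would record that $S_r(x)$ is open: if $x$ fails the right SVEP at $\lambda$, a nonzero holomorphic solution of $f(\mu)(\mu-x)=0$ persists on a neighbourhood, and the identity theorem for analytic functions shows that the right SVEP fails throughout that neighbourhood. Next, to see that $\Omega\setminus S_r(x)$ is open, I would take $\lambda\in\Omega\setminus S_r(x)$; since $\lambda\in\rho_{BF}(x)$, the element $x-\lambda$ is B-Fredholm, so Theorem \ref{3.4} yields $q_l(x-\lambda)<\infty$. Then Lemma \ref{3.2}(2) together with the openness of $\Omega$ supplies $\varepsilon>0$ such that $q_l(x-\mu)=0<\infty$ and $\mu\in\Omega$ for all $0<|\mu-\lambda|<\varepsilon$; applying the equivalence $(3)\Leftrightarrow(1)$ of Theorem \ref{3.4} once more shows $x$ has the right SVEP at each such $\mu$, and it already has it at $\lambda$. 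Thus the whole disc lies in $\Omega\setminus S_r(x)$, which is therefore open, and connectedness gives either $S_r(x)=\emptyset$ (case (1)) or $S_r(x)=\Omega$ (case (2)).

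In case (1), Theorem \ref{3.4} gives $q_l(x-\lambda)<\infty$ for every $\lambda\in\Omega$ and that $\sigma_r(x)$ has no cluster point in $\Omega$; hence $\sigma_r(x)\cap\Omega$ consists of isolated points and is at most countable. At every $\lambda\in\Omega\setminus\sigma_r(x)$ the element $x-\lambda$ is right invertible and so is not a right topological divisor of zero, which gives the stated conclusion apart from the countably many isolated points of $\sigma_r(x)\cap\Omega$. In case (2), the failure of the right SVEP at each $\lambda\in\Omega$ forces, via Theorem \ref{3.4}, both $q_l(x-\lambda)=\infty$ (the asserted value) and $p_r(x-\lambda)=\infty$; the latter means $R_{x-\lambda}$ is not injective, so $L(x-\lambda)=\ker R_{x-\lambda}\neq\{0\}$. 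Choosing a nonzero $a\in L(x-\lambda)$ and normalising, the constant sequence $a_n=a/||a||$ satisfies $||a_n||=1$ and $a_n(x-\lambda)=0$, exhibiting $x-\lambda$ as a right topological divisor of zero.

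The step I expect to require the most care is the one genuine asymmetry with Theorem \ref{4.1}: whereas there the topological-divisor conclusion in case (2) reads off directly from $p_l(x-\lambda)=\infty$ (equivalently $R(x-\lambda)\neq\{0\}$), here the reported invariant is $q_l(x-\lambda)=\infty$, which concerns ranges and does not by itself produce a nonzero left annihilator. One must therefore pass through $p_r(x-\lambda)=\infty$ to obtain $L(x-\lambda)\neq\{0\}$, exploiting that in Theorem \ref{3.4} the right SVEP is equivalent to the finiteness of \emph{both} $p_r$ and $q_l$. Everything else is a routine transcription of the left-handed argument.
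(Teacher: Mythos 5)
Your proof is correct and follows exactly the route the paper intends (the paper omits the proof of Theorem \ref{4.2}, noting only that it is the dual of the proof of Theorem \ref{4.1}, which is precisely what you carry out). You also correctly handle the one genuine asymmetry, passing through $p_r(x-\lambda)=\infty$ from Theorem \ref{3.4} to obtain $L(x-\lambda)\neq\{0\}$ in case (2), rather than trying to extract a nonzero left annihilator from $q_l(x-\lambda)=\infty$.
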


Combing Theorem \ref{4.1} with Theorem \ref{4.2}, we can get a further classification of the components of $\rho_{BF}(x)$.

\begin {theorem} ${\label{4.3}}$  \begin{upshape}
  Let $x \in \mathcal{A}$ and $\Omega$ a component of $\rho_{BF}(x)$. There are exactly the following four possibilities:

  $(1)$ $x$ has both the left SVEP and the right SVEP at every point of $\Omega$. In this case,
  $p_{l}(x-\lambda)=q_{l}(x-\lambda)<\infty$ for all $\lambda \in \Omega$.
   $\sigma(x)$ does not have limit points in $\Omega$. This case occurs exactly when $\Omega$ intersects the resolvent $\rho(x)$.

    $(2)$ $x$ has the left SVEP at every point of $\Omega$, whist $x$ fails to have the right SVEP for each point of $\Omega$.
    In this case, $p_{l}(x-\lambda)<\infty$ and $q_{l}(x-\lambda)=\infty$ for all $\lambda \in \Omega$.
    $\sigma_{l}(x)$ does not have limit points in $\Omega$ and $\Omega \subseteq \sigma_{r}(x)$

    $(3)$ $x$ has the right SVEP at every point of $\Omega$, whist $x$ fails to have the left SVEP for each point of $\Omega$.
    In this case, $p_{l}(x-\lambda)=\infty$ and $q_{l}(x-\lambda)<\infty$ for all $\lambda \in \Omega$.
   $\sigma_{r}(x)$ does not have limit points in $\Omega$ and $\Omega \subseteq \sigma_{l}(x)$.

  $(4)$ $x$ has neither the left SVEP nor the right SVEP at the points of $\Omega$. In this case, $p_{l}(x-\lambda)=q_{l}(x-\lambda)=\infty$ for all $\lambda \in \Omega$. $\Omega \subseteq \sigma_{l}(x) \cap \sigma_{r}(x)$.
 \end{upshape}
\end {theorem}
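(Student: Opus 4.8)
The plan is to read off the four-way split as the logical superposition of the two dichotomies already established. Theorem \ref{4.1} asserts that on a component $\Omega$ of $\rho_{BF}(x)$ the element $x$ has the left SVEP either at every point or at no point, and Theorem \ref{4.2} gives the same alternative for the right SVEP. Since the two alternatives are mutually independent, their conjunction yields exactly the four mutually exclusive and exhaustive cases (1)--(4), according to whether the left SVEP holds throughout or nowhere and, independently, whether the right SVEP holds throughout or nowhere. The assertions $p_{l}(x-\lambda)<\infty$ (resp. $=\infty$) and $q_{l}(x-\lambda)<\infty$ (resp. $=\infty$) then follow directly from Theorems \ref{4.1} and \ref{4.2}, bearing in mind that by Theorem \ref{3.4} the right SVEP at a point is equivalent to the finiteness of $q_{l}$ there.

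The only genuinely new algebraic input occurs in case (1), where both $p_{l}(x-\lambda)$ and $q_{l}(x-\lambda)$ are finite. Since $p_{l}(x-\lambda)=p(L_{x-\lambda})$ and $q_{l}(x-\lambda)=q(L_{x-\lambda})$, I would invoke the classical fact that an operator whose ascent and descent are both finite has them equal (see \cite{Aiena-book}), applied to the left multiplication operator $L_{x-\lambda}$, to conclude $p_{l}(x-\lambda)=q_{l}(x-\lambda)$.

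For the spectral statements I would proceed case by case. The ``no limit point'' clauses for $\sigma_{l}(x)$ and $\sigma_{r}(x)$ are precisely those supplied by Theorems \ref{4.1}(1) and \ref{4.2}(1), and in case (1) I would combine them via $\sigma(x)=\sigma_{l}(x)\cup\sigma_{r}(x)$ to obtain that $\sigma(x)$ has no limit points in $\Omega$. For the inclusions, whenever the right SVEP fails throughout $\Omega$ (cases (2) and (4)), Theorem \ref{4.2}(2) makes $x-\lambda$ a right topological divisor of zero for every $\lambda\in\Omega$; as a right invertible element is never a right topological divisor of zero, $x-\lambda$ fails to be right invertible and hence $\Omega\subseteq\sigma_{r}(x)$. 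The dual argument using Theorem \ref{4.1}(2) gives $\Omega\subseteq\sigma_{l}(x)$ in cases (3) and (4).

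The most delicate point is the final claim of case (1): both SVEP hold on $\Omega$ if and only if $\Omega$ meets $\rho(x)$. For sufficiency, any $\lambda\in\Omega\cap\rho(x)$ makes $x-\lambda$ invertible, which plainly has both the left and the right SVEP at $\lambda$; the dichotomies of Theorems \ref{4.1} and \ref{4.2} then propagate both properties across all of $\Omega$, placing us in case (1). For necessity, in case (1) the set $\sigma(x)\cap\Omega$ has no limit point in $\Omega$, hence is closed and discrete in $\Omega$ and therefore at most countable, whereas the nonempty open connected set $\Omega$ is uncountable; thus $\Omega\cap\rho(x)=\Omega\backslash\sigma(x)\neq\emptyset$. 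I expect this counting argument---confirming that the discreteness of $\sigma(x)\cap\Omega$ genuinely forces $\Omega\cap\rho(x)$ to be nonempty---to be the main, if modest, obstacle, while everything else reduces to bookkeeping over the two preceding theorems.
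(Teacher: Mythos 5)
Your proposal is correct and follows essentially the same route as the paper, which offers no separate argument for Theorem \ref{4.3} beyond the remark that it follows by combining Theorems \ref{4.1} and \ref{4.2}. You simply make explicit the details the paper leaves implicit: the superposition of the two dichotomies, the classical equality of finite ascent and descent applied to $L_{x-\lambda}$ in case (1), the inclusions via topological divisors of zero, and the countability argument showing that discreteness of $\sigma(x)\cap\Omega$ forces $\Omega\cap\rho(x)\neq\emptyset$ in case (1).
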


We conclude this section with some interesting applications of the classification of the components of $\rho_{BF}(x)$. Let $\Pi(x)$ denote the poles of the resolvent of $x$.

\begin {corollary} ${\label{4.4}}$  \begin{upshape}
  Let $x \in \mathcal{A}$. Then
  $$\rho_{BF}(x) \cap \partial\sigma(x)=\Pi(x).$$
Moreover, the following assertions are equivalent:

 (i) $\sigma_{BF}(x) =\emptyset$;

 (ii) $\partial\sigma(x) \subseteq \rho_{BF}(x)$;

 (iii) $x$ is algebraic.
 \end{upshape}
\end {corollary}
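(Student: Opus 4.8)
The plan is to prove the displayed identity first and then read off the three equivalences from it.

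\emph{The identity $\rho_{BF}(x)\cap\partial\sigma(x)=\Pi(x)$.} For the inclusion $\subseteq$, take $\lambda_{0}\in\rho_{BF}(x)\cap\partial\sigma(x)$ and, after a translation, assume $\lambda_{0}=0$; let $\Omega$ be the component of $\rho_{BF}(x)$ containing $0$. Since $\rho_{BF}(x)$ is open and $0\in\partial\sigma(x)$, every disc about $0$ meets $\rho(x)\subseteq\rho_{BF}(x)$, so a small disc lies in $\Omega$ and $\Omega\cap\rho(x)\neq\emptyset$. By Theorem \ref{4.3} we are then in case (1): $\sigma(x)$ has no limit point in $\Omega$ and $p_{l}(x)=q_{l}(x)<\infty$. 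Using $\sigma(L_{x})=\sigma(x)$, the point $0$ is isolated in $\sigma(L_{x})$ and $L_{x}$ has finite equal ascent and descent, so $L_{x}$ is Drazin invertible and $0$ is a pole of its resolvent. I would then descend this to the element $x$: the Riesz idempotent of $L_{x}$ at $0$ is $L_{p}$, where $p$ is the Riesz idempotent of $x$ at $0$, and Drazin invertibility of $L_{x}$ gives $L_{x^{k}p}=(L_{x})^{k}L_{p}=0$ for some $k$; injectivity of the homomorphism $a\mapsto L_{a}$ yields $x^{k}p=0$, so $x$ is Drazin invertible and $0$ is a pole of the resolvent of $x$, i.e. $0\in\Pi(x)$. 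For $\supseteq$, a pole $\lambda\in\Pi(x)$ is isolated in $\sigma(x)$, hence lies in $\partial\sigma(x)$, and $x-\lambda$ is Drazin invertible; applying $\pi$ to a Drazin inverse shows $\pi(x-\lambda)$ is Drazin invertible in $\mathcal{A}/soc(\mathcal{A})$, so $x-\lambda\in B\Phi(\mathcal{A})$ and $\lambda\in\rho_{BF}(x)$.

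\emph{The easy implications.} (i)$\Rightarrow$(ii) is immediate: $\sigma_{BF}(x)=\emptyset$ gives $\rho_{BF}(x)=\mathbb{C}\supseteq\partial\sigma(x)$. For (iii)$\Rightarrow$(i), if $x$ is algebraic then $\pi(x)$, and hence $\pi(x-\lambda)=\pi(x)-\lambda$ for every $\lambda$, is algebraic in $\mathcal{A}/soc(\mathcal{A})$; since an algebraic element is Drazin invertible, $x-\lambda\in B\Phi(\mathcal{A})$ for all $\lambda$, whence $\sigma_{BF}(x)=\emptyset$.

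\emph{The implication (ii)$\Rightarrow$(iii).} Assuming (ii) and using the identity, $\partial\sigma(x)=\rho_{BF}(x)\cap\partial\sigma(x)=\Pi(x)$. Each pole is isolated in $\sigma(x)$, so $\partial\sigma(x)$ is a discrete, hence finite, subset of the compact set $\sigma(x)$. A bounded nonempty open subset of $\mathbb{C}$ cannot have finite boundary: writing the connected set $\mathbb{C}\setminus\partial\sigma(x)$ as the disjoint union of the open sets $\operatorname{int}\sigma(x)$ and $\mathbb{C}\setminus\sigma(x)$ forces $\operatorname{int}\sigma(x)=\emptyset$. Thus $\sigma(x)=\partial\sigma(x)$ is finite and every point is a pole; writing $\sigma(x)=\{\mu_{1},\dots,\mu_{n}\}$ with $\mu_{i}$ a pole of order $m_{i}$, the holomorphic functional calculus provides orthogonal idempotents $p_{1},\dots,p_{n}$ with $\sum_{i}p_{i}=1$ and $(x-\mu_{i})^{m_{i}}p_{i}=0$, so that $\prod_{i=1}^{n}(x-\mu_{i})^{m_{i}}=0$ and $x$ is algebraic.

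\emph{Main obstacle.} The heart of the argument is the identity, and within it the descent from the operator $L_{x}$ to the element $x$: Theorem \ref{4.3} only delivers finiteness of $p_{l}(x)=q_{l}(x)$ (data about $L_{x}$), so I must know $\sigma(L_{x})=\sigma(x)$, identify the Riesz idempotent of $L_{x}$ as $L_{p}$, and use injectivity of $a\mapsto L_{a}$ to turn Drazin invertibility of $L_{x}$ into $x^{k}p=0$, i.e. Drazin invertibility of $x$. The only other non-formal point is the topological lemma that a compact plane set with finite boundary is finite, which powers (ii)$\Rightarrow$(iii).
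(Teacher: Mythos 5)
Your proof is correct, and its skeleton for the identity $\rho_{BF}(x)\cap\partial\sigma(x)=\Pi(x)$ is the same as the paper's: both directions hinge on observing that a boundary point of $\sigma(x)$ lying in $\rho_{BF}(x)$ sits in a component $\Omega$ of $\rho_{BF}(x)$ meeting $\rho(x)$, so case (1) of Theorem \ref{4.3} applies and gives $p_{l}(x-\lambda)=q_{l}(x-\lambda)<\infty$ together with isolation in $\sigma(x)$. Where you genuinely diverge is that you unpack three steps the paper delegates to citations. For the passage from Drazin invertibility of $L_{x-\lambda}$ to that of $x-\lambda$ the paper simply invokes [\cite{Boasso}, Theorem 4], whereas you argue directly through the Riesz idempotent ($L_{p}$ is the spectral idempotent of $L_{x}$ at $0$, and injectivity of $a\mapsto L_{a}$ turns $(L_{x})^{k}L_{p}=0$ into $x^{k}p=0$); this is sound, using $\sigma(L_{a})=\sigma(a)$ and functoriality of the holomorphic calculus. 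For (ii)$\Rightarrow$(iii) and (iii)$\Rightarrow$(i) the paper cites [\cite{Boasso-algebraic element in BA}, Theorem 2.1] ($x$ algebraic iff $\sigma_{D}(x)=\emptyset$, indeed iff $\partial\sigma(x)\subseteq\rho_{D}(x)$), while you reprove both halves: the topological observation that a compact plane set with finite boundary has empty interior, hence is finite, followed by the functional-calculus factorization $\prod_{i}(x-\mu_{i})^{m_{i}}=0$; and, conversely, the ring-theoretic fact that an algebraic element is strongly $\pi$-regular, hence Drazin invertible (worth noting that $\mathcal{A}/\mathrm{soc}(\mathcal{A})$ need not be a Banach algebra, so the purely algebraic form of this fact is exactly what is needed there, and your argument supplies it). The paper's route is shorter; yours is self-contained and makes the corollary independent of Boasso's results.
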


\begin{proof}
By [\cite{Boasso}, Theorem 12], the poles of the resolvent of $x$ are exactly the isolated points $\lambda$ of the spectrum $\sigma(x)$
such that $x-\lambda$ is Drazin invertible. Hence
$$\Pi(x)  \subseteq \rho_{BF}(x) \cap \partial\sigma(x).$$
For the other inclusion, suppose that $ \lambda \in \rho_{BF}(x) \cap \partial\sigma(x),$ then $\lambda$ belongs to some component $\Omega$ of $\rho_{BF}(x)$, which intersects the resolvent $\rho(x)$, so case (1) of Theorem \ref{4.3} occurs. Therefore, $p_{l}(x-\lambda)=q_{l}(x-\lambda)<\infty$, which is equivalent
to say $L_{x-\lambda}$ is Drazin invertible. By [\cite{Boasso}, Theorem 4], $x-\lambda$ is Drazin invertible, so $\lambda$ is a pole of the resolvent of $x$.

$\makebox{(i)} \Longrightarrow \makebox{(ii)}$ It is obvious.

$\makebox{(ii)} \Longrightarrow \makebox{(iii)}$ As the arguments above, we infer that if $\partial\sigma(x) \subseteq \rho_{BF}(x)$ then $\partial\sigma(x) \subseteq \rho_{D}(x)$,
where $\rho_{D}(x)=\{ \lambda \in \mathbb{C}: x-\lambda \makebox{ is Drazin invertible }\}.$ Consequently, $x$ is algebraic by
[\cite{Boasso-algebraic element in BA}, Theorem 2.1].

$\makebox{(iii)} \Longrightarrow \makebox{(i)}$ Again by [\cite{Boasso-algebraic element in BA}, Theorem 2.1], $\sigma_{D}(x) =\emptyset$, where
$\sigma_{D}(x) =\mathbb{C} \backslash \rho_{D}(x)$. Hence $\sigma_{BF}(x) =\emptyset$, as we know that $\sigma_{BF}(x) \subseteq \sigma_{D}(x)$.
\end{proof}

\begin {corollary} ${\label{4.5}}$  \begin{upshape}
The following assertions are equivalent:

(i) $x$ is B-Fredholm for each $x \in \mathcal{A}$;

(ii) $\mathcal{A}$ is algebraic, that is all elements in $\mathcal{A}$ are algebraic.

\noindent Moreover, if $\mathcal{A}$ is semisimple, then (i) and (ii) are
equivalent to:

(iii) $\mathcal{A}$ is finite dimensional.
 \end{upshape}
\end {corollary}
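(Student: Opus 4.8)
The plan is to read off (i) $\Leftrightarrow$ (ii) directly from Corollary \ref{4.4} and to treat the semisimple equivalence with (iii) by a separate, structural argument. For the first equivalence I would note that the requirement in (i)---that every element of $\mathcal{A}$ be B-Fredholm---is equivalent to the requirement that $\sigma_{BF}(x)=\emptyset$ for every $x\in\mathcal{A}$. Indeed, if all elements are B-Fredholm then for each $x$ and each $\lambda\in\mathbb{C}$ the element $x-\lambda$ again lies in $\mathcal{A}$ and is therefore B-Fredholm, so $\rho_{BF}(x)=\mathbb{C}$; conversely $\sigma_{BF}(x)=\emptyset$ gives in particular $0\in\rho_{BF}(x)$, i.e. $x$ itself is B-Fredholm. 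By the equivalence (i) $\Leftrightarrow$ (iii) of Corollary \ref{4.4}, $\sigma_{BF}(x)=\emptyset$ holds precisely when $x$ is algebraic; hence ``$\sigma_{BF}(x)=\emptyset$ for every $x$'' is exactly the assertion that every element of $\mathcal{A}$ is algebraic, which is (ii). This disposes of (i) $\Leftrightarrow$ (ii) with no further work.

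Now assume $\mathcal{A}$ is semisimple. The implication (iii) $\Rightarrow$ (ii) is immediate, since in a finite-dimensional algebra the powers $1,x,x^{2},\dots$ of any element are eventually linearly dependent, so $x$ satisfies a nontrivial polynomial. For the reverse implication (ii) $\Rightarrow$ (iii) I would begin with a Baire category argument. Writing $\mathcal{A}_{n}$ for the set of elements algebraic of degree at most $n$ (those $x$ for which $1,x,\dots,x^{n}$ are linearly dependent), algebraicity gives $\mathcal{A}=\bigcup_{n\geq 1}\mathcal{A}_{n}$. Each $\mathcal{A}_{n}$ is closed: if $x_{k}\to x$ with $x_{k}\in\mathcal{A}_{n}$, I would pick for each $k$ a coefficient vector $c^{(k)}$ of unit norm in $\mathbb{C}^{n+1}$ annihilating $x_{k}$, extract a convergent subsequence $c^{(k)}\to c$ using compactness of the unit sphere of $\mathbb{C}^{n+1}$, and pass to the limit to obtain a nontrivial polynomial relation for $x$. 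Since $\mathcal{A}$ is complete, some $\mathcal{A}_{N}$ has nonempty interior, so there is a ball $B(a,r)$ on which every element is algebraic of degree at most $N$.

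The remaining, and hardest, step is to convert this local, uniformly bounded algebraicity into finite dimensionality. The plan is first to spread the degree bound off the ball: for an arbitrary $y\in\mathcal{A}$, the entire map $\lambda\mapsto a+\lambda y$ takes values in $B(a,r)$ for $\lambda$ near $0$, so $a+\lambda y$ is algebraic of degree at most $N$ on a disc, and analytic/subharmonic techniques in the spirit of Aupetit's scarcity theorem should propagate a uniform bound to all $\lambda$, and hence to $y$ itself; the upshot is that every element of $\mathcal{A}$ is algebraic of some fixed bounded degree $N'$. Equivalently, every primitive quotient of the semisimple algebra $\mathcal{A}$ is a full matrix algebra over $\mathbb{C}$ of size at most $N'$---since a generic element of $M_{k}(\mathbb{C})$ has $k$ distinct eigenvalues and hence algebraic degree $k$---so $\mathcal{A}$ satisfies a polynomial identity. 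The structure theory for such algebras should then yield $\dim\mathcal{A}<\infty$, via Kaplansky's theorem on primitive PI-algebras together with the techniques of [\cite{Haily-Kaidi-Palacios}] and the Drazin-spectrum description of algebraic elements underlying Corollary \ref{4.4} (see [\cite{Boasso-algebraic element in BA}]). I expect the genuine obstacles to be exactly these last two passages: rigorously propagating the degree bound off the open ball, and then controlling both the size and the number of primitive factors so that completeness forces only finitely many of them---this is where the cited results are essential.
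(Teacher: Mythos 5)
Your treatment of (i) $\Leftrightarrow$ (ii) is exactly the paper's argument: both directions are read off from the equivalence ``$\sigma_{BF}(x)=\emptyset$ iff $x$ is algebraic'' of Corollary \ref{4.4}, after observing that (i) amounts to the emptiness of every B-Fredholm spectrum. The implication (iii) $\Rightarrow$ (i)/(ii) is likewise handled the same way. No issues there.

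The divergence, and the genuine gap, is in (ii) $\Rightarrow$ (iii). The paper dispatches this in one line by citing Aupetit's theorem that a semisimple algebraic Banach algebra is finite dimensional ([\cite{Aupetit-book}, Theorem 5.4.2]). You instead attempt to reconstruct a proof of that theorem, and your sketch breaks down at the end. The Baire category step and the closedness of the sets $\mathcal{A}_{n}$ are fine, and propagating the degree bound off the ball via subharmonicity/scarcity is indeed the right idea (though you leave it entirely to ``should propagate''). But the final deduction --- from ``every element is algebraic of bounded degree, hence $\mathcal{A}$ satisfies a polynomial identity'' to ``$\dim\mathcal{A}<\infty$ by Kaplansky's theorem on primitive PI-algebras'' --- is not valid as stated. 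Satisfying a polynomial identity is strictly weaker than bounded-degree algebraicity: $C(K)$ for an infinite compact $K$ is a commutative (hence PI) semisimple Banach algebra of infinite dimension, and more generally a semisimple PI Banach algebra can have infinitely many primitive quotients even when each is a matrix algebra of bounded size. Kaplansky's theorem controls each primitive quotient individually but says nothing about there being only finitely many of them; that finiteness is precisely the hard analytic content of Aupetit's theorem (one must exploit that every element has finite spectrum of uniformly bounded cardinality, not merely that the algebra is PI). So the structural endgame of your argument either needs to be replaced by a correct proof of Aupetit's Theorem 5.4.2 or, as the paper does, by a citation to it.
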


\begin{proof}
$\makebox{(i)} \Longrightarrow \makebox{(ii)}$ For each $x \in \mathcal{A}$, since $x-\lambda$ is B-Fredholm for all $\lambda \in \mathbb{C}$, we know that
$\sigma_{BF}(x)=\emptyset$. By Corollary \ref{4.4}, $x$ is algebraic. Consequently, $\mathcal{A}$ is algebraic.

$\makebox{(ii)} \Longrightarrow \makebox{(i)}$ By Corollary \ref{4.4} again, $\sigma_{BF}(x)=\emptyset$ for each $x \in \mathcal{A}$, and thus
$x$ is B-Fredholm.

$\makebox{(iii)} \Longrightarrow \makebox{(i)}$ It is obvious.

$\makebox{(ii)} \Longrightarrow \makebox{(iii)}$ According to [\cite{Aupetit-book}, Theorem 5.4.2] we infer that if $\mathcal{A}$ is algebraic and semisimple,
then $\mathcal{A}$ is finite dimensional.

\end{proof}

\begin {corollary} ${\label{4.6}}$  \begin{upshape}
  Let $x \in \mathcal{A}$. Then we have
  $$\partial\sigma(x) \subseteq \sigma_{BF}(x) \cup \Pi(x).$$
 \end{upshape}
\end {corollary}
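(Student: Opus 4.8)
The plan is to read this containment off directly from the first assertion of Corollary \ref{4.4}, which already identifies the set $\rho_{BF}(x) \cap \partial\sigma(x)$ with $\Pi(x)$. No new machinery is needed; the argument is a one-line dichotomy on a point of the topological boundary of the spectrum, so I would present it as a short point-chasing deduction rather than a fresh computation.

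Concretely, I would fix an arbitrary $\lambda \in \partial\sigma(x)$ and split into the two exhaustive cases determined by B-Fredholmness. If $\lambda \in \sigma_{BF}(x)$, then $\lambda$ already lies in the right-hand union $\sigma_{BF}(x) \cup \Pi(x)$ and there is nothing further to verify. Otherwise $\lambda \notin \sigma_{BF}(x)$, i.e. $\lambda \in \rho_{BF}(x)$; then $\lambda \in \rho_{BF}(x) \cap \partial\sigma(x)$, and the identity in Corollary \ref{4.4} forces $\lambda \in \Pi(x)$. In either case $\lambda \in \sigma_{BF}(x) \cup \Pi(x)$, and since $\lambda$ was an arbitrary boundary point, the inclusion $\partial\sigma(x) \subseteq \sigma_{BF}(x) \cup \Pi(x)$ follows.

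I would stress that there is no genuine obstacle left at this stage: the entire substance of the result is already carried by Corollary \ref{4.4}, whose proof in turn rests on the component classification of Theorem \ref{4.3} (a boundary point lying in $\rho_{BF}(x)$ sits in a component meeting the resolvent $\rho(x)$, hence case (1), yielding $p_{l}(x-\lambda)=q_{l}(x-\lambda)<\infty$) together with the transfer of Drazin invertibility from $L_{x-\lambda}$ back to $x-\lambda$. The only point worth flagging, so as not to overstate the conclusion, is that this containment is inherently one-directional: since $\sigma_{BF}(x)$ may possess interior points while $\Pi(x) \subseteq \partial\sigma(x)$, the reverse inclusion generally fails, and I would therefore not attempt to upgrade the statement to a set equality.
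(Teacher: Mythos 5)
Your proposal is correct and matches the paper's (implicit) reasoning exactly: the paper states Corollary \ref{4.6} without proof precisely because it is the immediate dichotomy you describe, read off from the identity $\rho_{BF}(x)\cap\partial\sigma(x)=\Pi(x)$ of Corollary \ref{4.4}. Nothing further is needed.
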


\begin {corollary} ${\label{4.7}}$  \begin{upshape}
  Let $x \in \mathcal{A}$ and $\Omega$ a component of $\rho_{BF}(x)$. Then we have
  $$\Omega \subseteq \sigma(x)\makebox{ or } \Omega \backslash \Pi(x) \subseteq \rho(x),$$
 \end{upshape}
\end {corollary}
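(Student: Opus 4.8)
The plan is to read off the result from the four-way classification of Theorem \ref{4.3}, splitting according to whether the component $\Omega$ meets the ordinary resolvent set $\rho(x)$. First I would dispose of the easy alternative: if $\Omega \cap \rho(x) = \emptyset$, then trivially $\Omega \subseteq \sigma(x)$ and the first option holds. By Theorem \ref{4.3} this is precisely what happens in possibilities $(2)$, $(3)$ and $(4)$, where $\Omega$ lies in $\sigma_{r}(x)$, in $\sigma_{l}(x)$, or in $\sigma_{l}(x) \cap \sigma_{r}(x)$ respectively, and hence in $\sigma(x)$ in each case.

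The substantive case is $\Omega \cap \rho(x) \neq \emptyset$. Theorem \ref{4.3} then forces possibility $(1)$: $x$ has both the left and the right SVEP at every point of $\Omega$, $p_{l}(x-\lambda)=q_{l}(x-\lambda)<\infty$ throughout $\Omega$, and, crucially, $\sigma(x)$ has no limit points in $\Omega$. Consequently every point of $\Omega \cap \sigma(x)$ is an isolated point of $\sigma(x)$, and it only remains to promote each such point to a pole.

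For this final step I would argue exactly as in the proof of Corollary \ref{4.4}. Fix $\lambda \in \Omega \cap \sigma(x)$. The finiteness $p_{l}(x-\lambda)=q_{l}(x-\lambda)<\infty$ says that $L_{x-\lambda}$ is Drazin invertible, so $x-\lambda$ is Drazin invertible by [\cite{Boasso}, Theorem 4]; since $\lambda$ is moreover isolated in $\sigma(x)$, [\cite{Boasso}, Theorem 12] identifies $\lambda$ as a pole of the resolvent, i.e. $\lambda \in \Pi(x)$. Thus $\Omega \cap \sigma(x) \subseteq \Pi(x)$, which rearranges to $\Omega \backslash \Pi(x) \subseteq \rho(x)$, the second option. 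I expect no real obstacle here: the entire argument rests on invoking Theorem \ref{4.3} to guarantee non-accumulation of $\sigma(x)$ in $\Omega$ in the resolvent-meeting case, after which the isolated-point-to-pole passage is the routine Drazin-invertibility computation already used for Corollary \ref{4.4}.
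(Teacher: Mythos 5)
Your proposal is correct and follows essentially the same route as the paper: both read the dichotomy off the four cases of Theorem \ref{4.3} (cases (2)--(4) giving $\Omega \subseteq \sigma(x)$, case (1) giving $p_{l}(x-\lambda)=q_{l}(x-\lambda)<\infty$ on $\Omega$) and then invoke [\cite{Boasso}, Theorem 12] to conclude $\Omega \backslash \Pi(x) \subseteq \rho(x)$. Your extra step of using non-accumulation of $\sigma(x)$ in $\Omega$ to certify isolation is harmless but not needed, since Drazin invertibility of $x-\lambda$ already forces $\lambda$ to be either a resolvent point or a pole.
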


\begin{proof}
In the cases (2), (3) and (4) of Theorem \ref{4.3}, we can see that $\Omega \subseteq \sigma(x)$.
In case (1) of Theorem \ref{4.3}, we have that $p_{l}(x-\lambda)=q_{l}(x-\lambda)<\infty$ for all $\lambda \in \Omega$.
Hence, for $\lambda \in \Omega \backslash \Pi(x)$, $x-\lambda$ is invertible by [\cite{Boasso}, Theorem 12].
Consequently, $\Omega \backslash \Pi(x) \subseteq \rho(x)$.
\end{proof}

\begin {corollary} ${\label{4.8}}$  \begin{upshape}
  Let $x \in \mathcal{A}$. Then we have
  $$\sigma(x) \makebox{ is at most countable} \Longleftrightarrow \sigma_{BF}(x) \makebox{ is at most countable}.$$
  In this case, $\sigma(x)=\sigma_{BF}(x) \cup \Pi(x).$
 \end{upshape}
\end {corollary}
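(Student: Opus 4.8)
The plan is to exploit the inclusion $\sigma_{BF}(x)\subseteq\sigma(x)$ together with the classification of the components of $\rho_{BF}(x)$ already obtained in Corollary \ref{4.7}. The forward implication is immediate: every invertible element is Drazin invertible, hence B-Fredholm, so $\rho(x)\subseteq\rho_{BF}(x)$, that is $\sigma_{BF}(x)\subseteq\sigma(x)$; thus if $\sigma(x)$ is at most countable, so is $\sigma_{BF}(x)$.

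For the converse, I would assume $\sigma_{BF}(x)$ is at most countable. The key topological observation is that the complement of an at most countable subset of $\mathbb{C}$ is path-connected, hence connected. Since $\sigma_{BF}(x)$ is closed, $\rho_{BF}(x)$ is then a connected open set and therefore coincides with its unique component $\Omega=\rho_{BF}(x)$. Because $\rho(x)$ is nonempty and contained in $\rho_{BF}(x)$, this component $\Omega$ meets $\rho(x)$; consequently the first alternative of Corollary \ref{4.7}, namely $\Omega\subseteq\sigma(x)$, is excluded, and we obtain $\Omega\setminus\Pi(x)\subseteq\rho(x)$. Rewriting, this says $\sigma(x)\cap\rho_{BF}(x)\subseteq\Pi(x)$, whence
$$\sigma(x)\subseteq\sigma_{BF}(x)\cup\Pi(x).$$

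To conclude countability, I would observe that $\Pi(x)$ consists of poles of the resolvent, which are isolated points of the spectrum; the set of isolated points of a subset of a separable metric space is at most countable, so $\Pi(x)$ is at most countable. Combined with the hypothesis on $\sigma_{BF}(x)$ and the displayed inclusion, this forces $\sigma(x)$ to be at most countable. Finally, for the asserted equality in this case, the inclusion $\sigma(x)\subseteq\sigma_{BF}(x)\cup\Pi(x)$ is already in hand, while the reverse inclusion $\sigma_{BF}(x)\cup\Pi(x)\subseteq\sigma(x)$ is trivial since both $\sigma_{BF}(x)\subseteq\sigma(x)$ and $\Pi(x)\subseteq\sigma(x)$.

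I expect the only genuinely substantive point to be the reduction to a single component: it rests on the elementary but essential fact that deleting an at most countable set from the plane preserves connectedness, which is what lets me apply Corollary \ref{4.7} to all of $\rho_{BF}(x)$ at once. Everything else reduces to bookkeeping with the standing inclusions, plus the standard remark that the isolated points $\Pi(x)$ are at most countable.
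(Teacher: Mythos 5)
Your proposal is correct and follows essentially the same route as the paper: the complement of the at most countable closed set $\sigma_{BF}(x)$ is connected, so $\rho_{BF}(x)$ is a single component meeting $\rho(x)$, and Corollary \ref{4.7} then yields $\rho_{BF}(x)\setminus\Pi(x)\subseteq\rho(x)$, i.e.\ $\sigma(x)=\sigma_{BF}(x)\cup\Pi(x)$, which is at most countable since $\Pi(x)$ consists of isolated points of the spectrum. You spell out the connectedness of the complement of a countable set and the countability of $\Pi(x)$, which the paper leaves implicit, but the argument is the same.
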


\begin{proof}
Suppose that $\sigma_{BF}(x)$ is at most countable, then $\rho_{BF}(x)$ is the only connected component which intersects the resolvent $\rho(x)$.
According to Corollary \ref{4.7}, $\rho_{BF}(x) \backslash \Pi(x) \subseteq \rho(x)$.
Consequently, $$\sigma(x)=\sigma_{BF}(x)\cup \Pi(x)$$
is countable, which completes the proof.
\end{proof}

An element $x \in \mathcal{A}$ is called meromorphic if every non-zero points of its spectrum are poles of the resolvent of $x$.
Note that if $\sigma_{BF}(x) \subseteq \{0\}$, then $\rho_{BF}(x)$ has only one component. As a result, the following corollary
is also a direct consequence of  Theorem \ref{4.3}.

\begin {corollary} ${\label{4.9}}$  \begin{upshape}
  Let $x \in \mathcal{A}$. Then we have
  $$x \makebox{ is meromorphic} \Longleftrightarrow  \sigma_{BF}(x) \subseteq \{0\}.$$
 \end{upshape}
\end {corollary}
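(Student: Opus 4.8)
The plan is to prove the two implications separately, relying on the classification in Theorem \ref{4.3}, its consequence Corollary \ref{4.7}, and two elementary inclusions: an invertible element is B-Fredholm, and a Drazin invertible element is B-Fredholm (equivalently $\sigma_{BF}(x) \subseteq \sigma_{D}(x)$, as noted in the proof of Corollary \ref{4.4}). Both directions come down to converting spectral information about $x$ into statements about the component structure of $\rho_{BF}(x)$.

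For the implication $\sigma_{BF}(x) \subseteq \{0\} \Longrightarrow x \text{ meromorphic}$, I would first record the observation flagged before the statement: since $\sigma_{BF}(x) \subseteq \{0\}$ we have $\mathbb{C}\setminus\{0\} \subseteq \rho_{BF}(x)$, so $\rho_{BF}(x)$ is connected and reduces to a single component $\Omega$. Because the spectrum of $x$ is a proper compact subset of $\mathbb{C}$, the resolvent set $\rho(x)$ is nonempty, and since $\rho(x) \subseteq \rho_{BF}(x) = \Omega$, the component $\Omega$ meets $\rho(x)$. Hence case $(1)$ of Theorem \ref{4.3} applies to $\Omega$, and Corollary \ref{4.7} yields $\Omega \setminus \Pi(x) \subseteq \rho(x)$. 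Now take any nonzero $\lambda \in \sigma(x)$; then $\lambda \in \mathbb{C}\setminus\{0\} \subseteq \Omega$ but $\lambda \notin \rho(x)$, so the inclusion $\Omega \setminus \Pi(x) \subseteq \rho(x)$ forces $\lambda \in \Pi(x)$. Thus every nonzero spectral point is a pole, i.e., $x$ is meromorphic.

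For the converse, suppose $x$ is meromorphic and fix $\lambda \neq 0$; I want $\lambda \in \rho_{BF}(x)$. If $\lambda \in \rho(x)$ then $x-\lambda$ is invertible, hence B-Fredholm. If instead $\lambda \in \sigma(x)$, then meromorphicity makes $\lambda$ a pole of the resolvent, and by [\cite{Boasso}, Theorem 12] (as recalled in the proof of Corollary \ref{4.4}) this means $x-\lambda$ is Drazin invertible; applying the canonical homomorphism $\pi$ shows $\pi(x-\lambda)$ is Drazin invertible in $\mathcal{A}/soc(\mathcal{A})$, so $x-\lambda$ is B-Fredholm. In either case $\lambda \in \rho_{BF}(x)$, whence $\sigma_{BF}(x) \subseteq \{0\}$.

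The step requiring the most care is the first implication: one must justify that $\rho_{BF}(x)$ genuinely reduces to a single component meeting $\rho(x)$ so that Theorem \ref{4.3}$(1)$ and Corollary \ref{4.7} are applicable, and then handle the excluded point $0$ correctly — the argument only controls nonzero spectral points, which is exactly what the definition of meromorphic demands. The remaining steps are routine once the inclusions ``invertible $\Rightarrow$ B-Fredholm'' and ``Drazin invertible $\Rightarrow$ B-Fredholm'' are in hand.
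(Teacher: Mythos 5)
Your proof is correct and follows essentially the route the paper intends: the forward direction is exactly the observation that $\rho_{BF}(x)$ reduces to a single component meeting $\rho(x)$, so that case $(1)$ of Theorem \ref{4.3} and Corollary \ref{4.7} apply, and the converse uses the standard facts that invertible and Drazin invertible elements are B-Fredholm. The paper leaves this as a one-line remark before the statement; your write-up just fills in the same argument in detail.
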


\section{B-Fredholm spectrum and perturbations} \label{sec5}

The main concern in the subsequent two sections is the intrinsic characterizations, from two different perspectives, of the following class of elements in $\mathcal{A}$,
$$\mathcal{F}:=\{f \in \mathcal{A}: f^{n} \in \mathrm{soc}(\mathcal{A}) \makebox{ for some }n \in \mathbb{N}\}.$$
In this section, we characterize elements in $\mathcal{F}$ by perturbation theory. Precisely, it is shown that
the B-Fredholm spectrum is invariant under any commuting perturbation $f \in \mathcal{F}$, and conversely this perturbation property characterizes such elements $f$ in the case that $\mathcal{A}$ is semisimple. This investigation dates back to an earlier result of M.A. Kaashoek and D.C. Lay in 1972, see [\cite{Kaashoek-Lay}, Theorem 2.2]. When $\mathcal{A}=\mathcal{B}(X)$, they showed that the descent spectrum is invariant under any commuting perturbation $F$ such that $F^{n}$ is of finite rank for some $n \in \mathbb{N}$. They also conjectured that this perturbation property characterizes such operators $F$. In 2006, Burgos, Kaidi, Mbekhta and Oudghiri [\cite{Burgos-Kaidi-Mbekhta-Oudghiri}, Theorem 3.1] provided an affirmative answer to this conjecture.
Later, this result is generalized to various spectra. In particular, Zeng, Jiang and Zhong extended this result to B-Fredholm spectrum [\cite{Zeng-Jiang-Zhong}, Theorem 2.1] by using the theory of operators with eventual topological uniform descent, see [\cite{Zeng-Jiang-Zhong}] for details.

Ha\"{\i}ly, Kaidi and Rodr\'{\i}guez Palacios extended [\cite{Burgos-Kaidi-Mbekhta-Oudghiri}, Theorem 3.1] to the descent spectrum in semisimple Banach algebras, see [\cite{Haily-Kaidi-Palacios}, Theorem 3.6]. By using the characterization of algebraic elements (see Corollary \ref{4.4}) and some techniques developed in [\cite{Haily-Kaidi-Palacios}], we shall prove a variant of [\cite{Zeng-Jiang-Zhong}, Theorem 2.1] for B-Fredholm spectrum in semisimple Banach algebras.

To do this we first need a preliminary result concerning Drazin invertibility.

\begin {lemma} ${\label{5.1}}$  \begin{upshape} Let $\mathcal{A}$ be an algebra with a unit. If $a \in \mathcal{A}$ is Drazin invertible and $b$ is a nilpotent element commuting with $a$, then $a+b$ is also Drazin invertible.
 \end{upshape}
\end {lemma}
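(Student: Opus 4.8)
The plan is to build the Drazin inverse of $a+b$ from the core--nilpotent decomposition of $a$ determined by its spectral idempotent. Write $a^{D}$ for the Drazin inverse of $a$, let $k$ be its Drazin index, and set $p=1-aa^{D}=1-a^{D}a$. Then $p$ is an idempotent commuting with $a$; the Drazin identities give $a^{k}p=a^{k}(1-a^{D}a)=0$, so $(ap)^{k}=a^{k}p=0$ and $ap$ is nilpotent inside the corner $p\mathcal{A}p$, while $a^{D}=(1-p)a^{D}(1-p)$ is the inverse of $a(1-p)$ inside the unital corner $(1-p)\mathcal{A}(1-p)$ (whose unit is $1-p$), since $a(1-p)a^{D}=aa^{D}=1-p=a^{D}a(1-p)$. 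Thus $a$ splits as an invertible summand on $(1-p)\mathcal{A}(1-p)$ and a nilpotent summand on $p\mathcal{A}p$.

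The one genuine point is that $b$ commutes with the idempotent $p$. This is where the hypotheses are used: as recalled above, $a^{D}$ lies in the double commutant of $a$, so $b$, which commutes with $a$, commutes with $a^{D}$ and hence with $p$. Therefore $p$ commutes with $a+b$, and $a+b$ splits across the same idempotent, as $(a+b)(1-p)\in(1-p)\mathcal{A}(1-p)$ together with $(a+b)p\in p\mathcal{A}p$.

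Next I would analyse the two corners separately. In $(1-p)\mathcal{A}(1-p)$ the element $a(1-p)$ is invertible while $b(1-p)$ is nilpotent (from $b^{m}=0$ and $bp=pb$), and the two commute; an invertible element perturbed by a commuting nilpotent is again invertible, by factoring out $a(1-p)$ and summing the resulting finite Neumann series, so $(a+b)(1-p)$ is invertible in this corner, with inverse $c$ say. In $p\mathcal{A}p$ the elements $ap$ and $bp$ are commuting nilpotents, so their sum $(a+b)p$ is nilpotent by the binomial theorem.

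Finally I would assemble the Drazin inverse by extending $c$ by zero, i.e.\ viewing $c$ as the element of $\mathcal{A}$ with $cp=pc=0$ and $c=(1-p)c=c(1-p)$. Since $p$ commutes with $a+b$, one checks $(a+b)c=c(a+b)=1-p$, whence $c(a+b)c=c$ and $c$ commutes with $a+b$; moreover $(a+b)^{n}-(a+b)^{n}c(a+b)=(a+b)^{n}p=((a+b)p)^{n}$ vanishes as soon as $n$ exceeds the nilpotency index of $(a+b)p$. Hence $c$ is the Drazin inverse of $a+b$, so $a+b$ is Drazin invertible, with Drazin index controlled by the Drazin index of $a$ and the nilpotency index of $b$. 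There is no real obstacle beyond establishing that $b$ commutes with $p$; everything after that is a routine corner computation.
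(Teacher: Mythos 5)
Your proof is correct, but it takes a genuinely different route from the paper. The paper transfers the problem to the left multiplication operators: by a cited proposition of Berkani, $a$ is Drazin invertible iff $L_{a}$ has finite ascent and descent; since $L_{b}$ is a nilpotent operator commuting with $L_{a}$, the classical Kaashoek--Lay perturbation theorem gives finite ascent and descent for $L_{a+b}$, and one transfers back. Your argument instead works directly in the algebra: you form the spectral idempotent $p=1-aa^{D}$, use the double-commutant property of the Drazin inverse to see that $b$ commutes with $p$, and then split $a+b$ into an invertible-plus-commuting-nilpotent part on the corner $(1-p)\mathcal{A}(1-p)$ (invertible by the finite Neumann series) and a sum of commuting nilpotents on $p\mathcal{A}p$ (nilpotent by the binomial theorem), finally verifying the three Drazin identities for the assembled candidate $c$. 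All the steps check out, including the key corner computations $a^{k}p=0$ and $(a+b)c=c(a+b)=1-p$. What your approach buys is self-containedness and an explicit formula for the Drazin inverse of $a+b$, with no appeal to operator-theoretic ascent/descent machinery; it is a purely ring-theoretic argument valid in any unital ring, which matches the generality in which the lemma is stated. What the paper's approach buys is brevity, by outsourcing the entire perturbation argument to Kaashoek--Lay. Incidentally, the "invertible plus commuting nilpotent is invertible" fact you invoke is exactly the one the paper proves explicitly inside Theorem 5.3, so your route also makes the logical dependencies within the section more uniform.
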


\begin{proof} Since $a \in \mathcal{A}$ is Drazin invertible,  by [\cite{Berkani-rings}, Proposition 2.5]
we infer that the left multiplication operator $L_{a}$ has finite ascent and descent. Note that $L_{b}$ is a nilpotent linear operator which commutes with $L_{a}$. Therefore, according to a classical result of Kaashoek and Lay ([\cite{Kaashoek-Lay}, Theorem 2.2]),
$L_{a+b}$ also have finite ascent and descent. This is equivalent to say $a+b$ is Drazin invertible by [\cite{Aiena-book},  Theorem 3.6] and [\cite{Berkani-rings}, Proposition 2.5].
\end{proof}

Following Aupetit and Mouton [\cite{Aupetit-Mouton}], a trace function on the socle is defined by $\tau(a)=\Sigma_{\lambda \in \sigma(a)} \lambda m(\lambda,a)$ for $a \in soc(\mathcal{A})$, where $m(\lambda,a)$ is the algebraic multiplicity of $\lambda$ for $a$.
With the aid of the trace function, the index for B-Fredholm elements was introduced in [\cite{Berkani-BFredholm}, Definition 2.2].

\begin {definition} ${\label{5.2}}$  \begin{upshape}
The index of a B-Fredholm element $a \in \mathcal{A}$ is defined by
$$\makebox{i}(a)=\tau(aa_{0}-a_{0}a),$$
where $\pi(a_{0})$ is a Drazin inverse of $\pi(a)$.
 \end{upshape}
\end {definition}

According to [\cite{Berkani-BFredholm}, Theorem 2.3], the index of a B-Fredholm element $a \in \mathcal{A}$ is well defined and is independent of $a_{0}$.

It is well known (see [\cite{Barnes-Murphy-Smyth-West}, Theorem F.1.10]) that $a \in \mathcal{A}$ is Fredholm if and only if $\mathcal{A}a=\mathcal{A}(1-q)$ and $a\mathcal{A}=(1-p)\mathcal{A}$ for some idempotents $p,q \in soc(\mathcal{A})$. In this case, we say that $q$ is a right Barnes idempotent for $a$, and $p$ is a left Barnes idempotent for $a$. The Fredholm index of a Fredholm element $a \in \mathcal{A}$ is given by $\makebox{i}(a)=null(a)-def(a)$,
see [\cite{Barnes-Fredholm-element}, Definition 3.1]. According to [\cite{Grobler-Raubenheimer}, Theorems 3.14 and 3.17], the Fredholm index and the B-Fredholm index coincide for Fredholm elements.

Recall that an algebra $\mathcal{A}$ is said to be semisimple if its Jacobson radical $rad(\mathcal{A})$ is equal to $\{0\} $. We say that $\mathcal{A}$ is primitive if it possesses a faithful irreducible representation. It is well known that
$$ \makebox{primitive} \Longrightarrow \makebox{semisimple} \Longrightarrow \makebox{semiprime}.$$

\begin {theorem} ${\label{5.3}}$  \begin{upshape} Let $f \in \mathcal{A}$ with $f^{n} \in \mathrm{soc}(\mathcal{A})$ for some $n \in \mathbb{N}$. If $x \in B\Phi(\mathcal{A})$ commutes with $f$, then $x + f \in B\Phi(\mathcal{A})$. If, additionally, $\mathcal{A}$ is primitive then
$$\makebox{i}(x+f)=\makebox{i}(x).$$
 \end{upshape}
\end {theorem}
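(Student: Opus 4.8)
The plan is to treat the two assertions separately; primitivity will be needed only for the index identity. For the B-Fredholmness of $x+f$ I would simply pass to the quotient algebra $\mathcal{A}/\mathrm{soc}(\mathcal{A})$. Since $f^{n}\in\mathrm{soc}(\mathcal{A})$ we have $\pi(f)^{n}=\pi(f^{n})=0$, so $\pi(f)$ is nilpotent, and since $xf=fx$ the element $\pi(f)$ commutes with $\pi(x)$. As $x\in B\Phi(\mathcal{A})$, $\pi(x)$ is Drazin invertible in the unital algebra $\mathcal{A}/\mathrm{soc}(\mathcal{A})$. Applying Lemma \ref{5.1} in this quotient with $a=\pi(x)$ and $b=\pi(f)$ shows that $\pi(x+f)=\pi(x)+\pi(f)$ is Drazin invertible, that is, $x+f\in B\Phi(\mathcal{A})$. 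This step is routine and uses no primitivity.

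For the index identity my strategy is a homotopy argument along the segment $x+tf$, $t\in[0,1]$. By the first part each $x+tf$ is B-Fredholm, because $tf$ commutes with $x$ and $(tf)^{n}=t^{n}f^{n}\in\mathrm{soc}(\mathcal{A})$. The crucial point is to exhibit one $t$-analytic family of Drazin inverses. Let $x_{0}\in\mathcal{A}$ be chosen so that $\pi(x_{0})$ is the Drazin inverse of $\pi(x)$; then $\pi(x_{0})$ lies in the double commutant of $\pi(x)$ and hence commutes with $\pi(f)$. Using the core--nilpotent splitting of $\pi(x)$ with respect to its spectral idempotent, together with the fact that $\pi(f)$ commutes with that idempotent, I would verify the explicit formula
$$\pi\Big(\sum_{i=0}^{n-1}(-1)^{i}\,t^{i}\,x_{0}^{\,i+1}f^{\,i}\Big)=\big(\pi(x)+t\pi(f)\big)^{D},$$
the terms with $i\geq n$ being annihilated by $\pi$. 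Writing $y_{0}(t):=\sum_{i=0}^{n-1}(-1)^{i}t^{i}x_{0}^{\,i+1}f^{\,i}$, this is a polynomial in $t$ whose image under $\pi$ is the Drazin inverse of $\pi(x+tf)$. By Definition \ref{5.2} the index is then $\mathrm{i}(x+tf)=\tau\big((x+tf)y_{0}(t)-y_{0}(t)(x+tf)\big)$, and applying $\pi$ shows the commutator inside $\tau$ lies in $\mathrm{soc}(\mathcal{A})$ and depends polynomially, hence continuously, on $t$. Since the trace $\tau$ is continuous on the socle (see \cite{Aupetit-Mouton}), the map $t\mapsto\mathrm{i}(x+tf)$ is continuous on $[0,1]$.

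To conclude I would invoke primitivity. When $\mathcal{A}$ is primitive the trace on $\mathrm{soc}(\mathcal{A})$ coincides with the ordinary operator trace under a faithful irreducible representation, so the B-Fredholm index takes integer values. A continuous integer-valued function on the connected interval $[0,1]$ is constant, whence $\mathrm{i}(x+f)=\mathrm{i}(x+0\cdot f)=\mathrm{i}(x)$.

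I expect the main obstacle to be the second paragraph: checking that the single family $y_{0}(t)$ simultaneously represents the Drazin inverse of every $\pi(x+tf)$, so that continuity of the index reduces to continuity of $\tau$. The delicate bookkeeping is that $x_{0}$ commutes with $x$ and $f$ only modulo $\mathrm{soc}(\mathcal{A})$, so the displayed formula must be verified entirely in $\mathcal{A}/\mathrm{soc}(\mathcal{A})$, where the core--nilpotent decomposition of $\pi(x)$ makes $\pi(x)+t\pi(f)$ invertible on the core part and nilpotent on the complementary part; this is precisely the kind of computation developed in \cite{Haily-Kaidi-Palacios}. It is worth emphasizing that primitivity is used exactly to force the index into $\mathbb{Z}$: without this discreteness, continuity along the homotopy would not by itself yield constancy.
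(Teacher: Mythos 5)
Your first paragraph is exactly the paper's argument for the first assertion: pass to $\mathcal{A}/\mathrm{soc}(\mathcal{A})$ and apply Lemma \ref{5.1} to $\pi(x)$ and the commuting nilpotent $\pi(f)$. Nothing to add there.

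For the index identity you take a genuinely different route. The paper never deforms $x$ into $x+f$ at the B-Fredholm level: it first shifts by a small nonzero $\lambda$ so that, by [\cite{Berkani-BFredholm}, Theorem 3.1], $x-\lambda$ and $x+f-\lambda$ are Fredholm with $\mathrm{i}(x)=\mathrm{i}(x-\lambda)$ and $\mathrm{i}(x+f)=\mathrm{i}(x+f-\lambda)$, and then runs the homotopy $\mu\mapsto x-\lambda+\mu f$ inside the Fredholm elements (invertible plus commuting nilpotent is invertible in $\mathcal{A}/\overline{\mathrm{soc}(\mathcal{A})}$), concluding by Barnes' stability of the Fredholm index [\cite{Barnes-Fredholm-element}, Theorem 4.1]; primitivity is consumed in that last citation. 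You instead stay at the B-Fredholm level along $t\mapsto x+tf$ and exhibit an explicit polynomial family of Drazin inverses. Your formula is in fact correct: writing $e=\pi(x)\pi(x)^{D}$, the element $\pi(f)$ commutes with $e$, the $(1-e)$-corner of $\pi(x)+t\pi(f)$ is nilpotent, and the $e$-corner is invertible with inverse $\sum_{i\ge 0}(-1)^{i}t^{i}(\pi(x)^{D})^{i+1}\pi(f)^{i}$, which is what $\pi(y_{0}(t))$ computes since $\pi(f)^{i}=0$ for $i\ge n$. This is a legitimately more hands-on alternative to the $\lambda$-shift.

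Two steps need repair, one cosmetic and one substantive. Cosmetically: continuity of $\tau$ on all of $\mathrm{soc}(\mathcal{A})$ is not a safe citation; what you actually need is that $c(t)=(x+tf)y_{0}(t)-y_{0}(t)(x+tf)$ is a polynomial in $t$ whose coefficients lie in $\mathrm{soc}(\mathcal{A})$ (true, since $c(t)\in\mathrm{soc}(\mathcal{A})$ for infinitely many $t$ and $\mathrm{soc}(\mathcal{A})$ is a subspace), so that additivity and homogeneity of $\tau$ ([\cite{Aupetit-Mouton}, Theorem 3.3]) already make $t\mapsto\mathrm{i}(x+tf)$ a polynomial. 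Substantively: your conclusion hinges on the B-Fredholm index being integer-valued in a primitive algebra, and ``the trace coincides with the operator trace'' does not deliver this --- the trace of a general socle element is an arbitrary complex number, and integrality of $\tau(aa_{0}-a_{0}a)$ is a theorem, not a formality. For Fredholm elements it is the identification $\mathrm{i}(a)=null(a)-def(a)$ of [\cite{Grobler-Raubenheimer}], and to transfer it to B-Fredholm elements one invokes $\mathrm{i}(x)=\mathrm{i}(x-\lambda)$ for small $\lambda\neq 0$ --- i.e., exactly the reduction to the Fredholm case on which the paper's proof is built. So your homotopy closes only after importing the same Fredholm-index machinery, at which point the paper's shorter path is available and spares you the explicit Drazin-inverse computation.
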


\begin{proof} Since $x$ is B-redholm, $\pi(x)$ is Drazin invertible. Observe that $\pi(f)$ is a nilpotent element commuting with $\pi(x)$. It follows from Lemma \ref{5.1} that $\pi(x+f)=\pi(x)+\pi(f)$ is also Drazin invertible. That is, $x + f$ is B-Fredholm.

For the index equality, we consider the canonical map $\phi:\mathcal{A} \longrightarrow \mathcal{A} /\overline{ soc(\mathcal{A})}$. By [\cite{Berkani-BFredholm}, Theorem 3.1], there exists  $\varepsilon >0$ such that for $0<|\lambda|<\varepsilon$, $x-\lambda$ is Fredholm, or equivalently,
$\phi(x-\lambda)$ is invertible in the Banach algebra $\mathcal{A} /\overline{ soc(\mathcal{A})}$. For $\mu \in [0,1]$, it is clear that $\phi(\mu f)$ is a nilpotent element commuting with $\phi(x-\lambda)$. We claim that $\phi(x-\lambda+\mu f)$ is invertible. Our claim follows from the following fact:

If $a$ is an invertible element in a unital algebra, $b$ is a nilpotent element commuting with $a$, then $a+b$ is also invertible.

Indeed, $(1+a^{-1}b)(1-a^{-1}b+a^{-2}b^{2}-\cdots+(-1)^{n-1}a^{-(n-1)}b^{n-1})=1+(-1)^{n-1}a^{-n}b^{n}=1.$
Hence $a+b=a(1+a^{-1}b)$ is invertible.

Now the path $\{ x-\lambda+\mu f: \mu \in [0,1]\}$ lies in the set of Fredholm elements in $\mathcal{A}$. By the stability of the Fredholm index (see [\cite{Barnes-Fredholm-element}, Theorem 4.1]), it follows that $\makebox{i}(x-\lambda)=\makebox{i}(x-\lambda+f)$.
Again by [\cite{Berkani-BFredholm}, Theorem 3.1], $\makebox{i}(x)= \makebox{i}(x-\lambda)$ and $\makebox{i}(x+f)= \makebox{i}(x+f-\lambda)$ for sufficiently small $\lambda$. Consequently, $\makebox{i}(x+f)=\makebox{i}(x).$
\end{proof}

An element $a \in \mathcal{A}$ is called B-Weyl if it is B-Fredholm of index zero. The B-Weyl spectrum of $a$ is then defined by
$$\sigma_{BW}(a)=\{\lambda \in \mathbb{C}: a-\lambda \makebox{ is not B-Weyl} \}.$$
Clearly, $\sigma_{BF}(a) \subseteq \sigma_{BW}(a) \subseteq \sigma_{D}(a)$. Now Combing Corollary \ref{4.4} and
[\cite{Boasso-algebraic element in BA}, Theorem 2.1], we infer that
\begin{equation} {\label{eq5.4}}
 \qquad\qquad\qquad\qquad\qquad  a \makebox{ is algebraic} \Longleftrightarrow \sigma_{BW}(a)=\emptyset.
 \end{equation}

Let $p \in \mathcal{A}$ be an idempotent. Clearly $p\mathcal{A}p$ is a closed subalgebra of $\mathcal{A}$ with identity $p$.
For $b \in p\mathcal{A}p$, in order to avoid confusion, we let
$$\sigma_{BF}(b, \mathcal{A})=\{\lambda \in \mathbb{C}: b-\lambda \makebox{ is not B-Fredholm in } \mathcal{A}\}$$
and
$$\sigma_{BF}(b, p\mathcal{A}p)=\{\lambda \in \mathbb{C}: b-\lambda p \makebox{ is not B-Fredholm in } p\mathcal{A}p\}.$$
When no ambiguity is possible, we write $\sigma_{BF}(a)$ instead of $\sigma_{BF}(a, \mathcal{A})$ for $a \in \mathcal{A}$ as before.
For other spectra, we adopt analogous notations.

\begin {lemma} ${\label{5.4}}$ \begin{upshape} Let $\mathcal{A}$ be a unital semisimple Banach algebra. If $p \in \mathcal{A}$ is an idempotent commuting with $a \in \mathcal{A}$, then
\begin{equation} {\label{eq5.1}}
\qquad\qquad\qquad\qquad \ \   \sigma_{BF}(ap, p\mathcal{A}p)= \sigma_{BF}(ap, \mathcal{A})
\end{equation}
\begin{equation} {\label{eq5.2}}
\qquad\qquad\qquad\qquad \ \   \sigma_{BW}(ap, p\mathcal{A}p)= \sigma_{BW}(ap, \mathcal{A})
\end{equation}
 and
 \begin{equation} {\label{eq5.3}}
 \qquad\qquad \ \  \sigma_{BF}(a, \mathcal{A})=\sigma_{BF}(ap, \mathcal{A}) \cup \sigma_{BF}(a(1-p), \mathcal{A}).
 \end{equation}
 \end{upshape}
\end {lemma}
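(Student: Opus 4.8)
The plan is to reduce all three identities to a single \emph{block--diagonal principle}. Writing $q=1-p$, the hypothesis $ap=pa$ forces $ap=pap\in p\mathcal{A}p$ and $a(1-p)=qaq\in q\mathcal{A}q$, so for every $\lambda$ the element $a-\lambda$ splits as a block--diagonal sum
\[
a-\lambda=(ap-\lambda p)+(aq-\lambda q),
\]
with the two summands lying in the complementary corners $p\mathcal{A}p$ and $q\mathcal{A}q$ (both semisimple, by semisimplicity of $\mathcal{A}$, so that $B\Phi(\cdot)$ and the index make sense in them). Everything will follow once I prove, for arbitrary $b\in p\mathcal{A}p$ and $c\in q\mathcal{A}q$,
\[
b+c\in B\Phi(\mathcal{A})\iff b\in B\Phi(p\mathcal{A}p)\ \text{and}\ c\in B\Phi(q\mathcal{A}q),
\]
together with additivity $\mathrm{i}(b+c)=\mathrm{i}(b)+\mathrm{i}(c)$ in that situation.

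Two ingredients feed this principle. First, the socle identity $soc(p\mathcal{A}p)=soc(\mathcal{A})\cap p\mathcal{A}p$. The inclusion $\subseteq$ is immediate from the correspondence of minimal idempotents: a nonzero idempotent $e\in p\mathcal{A}p$ satisfies $e(p\mathcal{A}p)e=e\mathcal{A}e$, so $e\in Min(p\mathcal{A}p)$ iff $e\in Min(\mathcal{A})$, whence minimal one--sided ideals of the corner sit inside $soc(\mathcal{A})$; the reverse inclusion is the structural part, obtained by intersecting a finite--order ideal of $\mathcal{A}$ with the corner and invoking Lemma \ref{2.2}(5) for the semiprime unital algebra $p\mathcal{A}p$. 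Since $\ker(\pi|_{p\mathcal{A}p})=soc(\mathcal{A})\cap p\mathcal{A}p$, this identity gives a canonical isomorphism $p\mathcal{A}p/soc(p\mathcal{A}p)\cong\overline{p}\,\overline{\mathcal{A}}\,\overline{p}$, where $\overline{\mathcal{A}}=\mathcal{A}/soc(\mathcal{A})$ and $\overline{p}=\pi(p)$; thus $b\in B\Phi(p\mathcal{A}p)$ iff $\pi(b)$ is Drazin invertible in the corner $\overline{p}\,\overline{\mathcal{A}}\,\overline{p}$. Second, a purely algebraic decoupling of Drazin invertibility: in any unital algebra, if $\overline{p}$ commutes with $w=\overline{b}+\overline{c}$ (with $\overline{b}\in\overline{p}\,\overline{\mathcal{A}}\,\overline{p}$, $\overline{c}\in\overline{q}\,\overline{\mathcal{A}}\,\overline{q}$), then the Drazin inverse of $w$, lying in the double commutant of $w$ as recalled above, commutes with $\overline{p}$ and is therefore itself block--diagonal; substituting into the three defining Drazin equations makes them decouple coordinatewise, so $w$ is Drazin invertible iff $\overline{b},\overline{c}$ are Drazin invertible in their respective corners. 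Applying this to $\overline{b}=\pi(b)$, $\overline{c}=\pi(c)$ and combining with the socle identity establishes the principle.

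With the principle in hand the spectral identities are routine. For \eqref{eq5.1}, put $b=ap$: when $\lambda\neq0$ the $q$--summand $-\lambda q$ is invertible in $q\mathcal{A}q$, hence B-Fredholm there, so the principle gives $ap-\lambda\in B\Phi(\mathcal{A})\iff ap-\lambda p\in B\Phi(p\mathcal{A}p)$; when $\lambda=0$ the $q$--summand is $0$, which is trivially B-Fredholm, and the same equivalence holds. This is exactly \eqref{eq5.1}. For \eqref{eq5.3}, apply the principle to $a-\lambda=(ap-\lambda p)+(aq-\lambda q)$: it is B-Fredholm in $\mathcal{A}$ iff both corner pieces are B-Fredholm in $p\mathcal{A}p$ and $q\mathcal{A}q$; passing through \eqref{eq5.1} and its evident analogue with $q$ in place of $p$ (legitimate since $q$ also commutes with $a$) rewrites this as ``$ap-\lambda$ and $a(1-p)-\lambda$ are both B-Fredholm in $\mathcal{A}$,'' i.e. $\lambda\notin\sigma_{BF}(ap,\mathcal{A})\cup\sigma_{BF}(a(1-p),\mathcal{A})$.

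Finally \eqref{eq5.2} adds the index. Choosing a block--diagonal lift $w_{0}=b_{0}+c_{0}$ of the block--diagonal Drazin inverse of $\pi(b+c)$, the commutator decouples as $(b+c)w_{0}-w_{0}(b+c)=(bb_{0}-b_{0}b)+(cc_{0}-c_{0}c)$, the two summands lying in $soc(p\mathcal{A}p)$ and $soc(q\mathcal{A}q)$; by linearity of the trace function on the socle, together with the fact that $\tau$ computed in a corner agrees with $\tau$ in $\mathcal{A}$ on socle elements of that corner (the nonzero spectrum and its multiplicities are unchanged when passing between $p\mathcal{A}p$ and $\mathcal{A}$, the value at $0$ being irrelevant), Definition \ref{5.2} yields $\mathrm{i}(b+c)=\mathrm{i}(b)+\mathrm{i}(c)$. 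Since the relevant $q$--piece ($-\lambda q$ for $\lambda\neq0$, or $0$ for $\lambda=0$) has index $0$, we get $\mathrm{i}(ap-\lambda,\mathcal{A})=\mathrm{i}(ap-\lambda p,p\mathcal{A}p)$ whenever either side is B-Fredholm, which upgrades \eqref{eq5.1} to the B-Weyl identity \eqref{eq5.2}. I expect the two ``compatibility'' facts I have isolated to be the main obstacles, namely the nontrivial inclusion $soc(\mathcal{A})\cap p\mathcal{A}p\subseteq soc(p\mathcal{A}p)$ and the agreement of the trace functions on corner socle elements; the remainder is the formal decoupling afforded by the double--commutant property of the Drazin inverse.
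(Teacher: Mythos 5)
Your proof is correct and follows essentially the same route as the paper's: both arguments rest on the corner--socle identity $soc(p\mathcal{A}p)=p\,soc(\mathcal{A})p$ and on splitting/assembling Drazin inverses across the orthogonal corners $p\mathcal{A}p$ and $(1-p)\mathcal{A}(1-p)$ in the quotient modulo the socle, and the explicit inverses you produce ($\pi(pbp)$ by compression, $\pi(b-\tfrac{1}{\lambda}(1-p))$ and $\pi(b+c)$ by assembly) coincide with the ones the paper writes down. The only cosmetic differences are that you deduce block-diagonality of the Drazin inverse from its double-commutant property instead of verifying the compressed candidate by direct computation, and that you are somewhat more explicit than the paper about why the trace on $soc(p\mathcal{A}p)$ agrees with the trace on $soc(\mathcal{A})$ in the index computation.
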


\begin{proof} We use the fact $soc(p\mathcal{A}p)=psoc(\mathcal{A})p$ as observed by Barnes in [\cite{Barnes-algebraic-element}, p. 229]. This fact is crucial in the following proof.

Suppose that $\lambda \notin \sigma_{BF}(ap, \mathcal{A})$, i.e., $\pi(ap-\lambda)$ is Drazin invertible in $\mathcal{A} / soc(\mathcal{A})$. Then there is $b \in \mathcal{A}$ such that $\pi(ap-\lambda)\pi(b)=\pi(b)\pi(ap-\lambda), \pi(b)\pi(ap-\lambda)\pi(b)=\pi(b) \makebox{ and }$  $$\pi(ap-\lambda)\pi(b)\pi(ap-\lambda)-\pi(ap-\lambda) \makebox{ is nilpotent}.$$
Let $\phi:p\mathcal{A}p\rightarrow p\mathcal{A}p/ p\mathrm{soc}(\mathcal{A})p$ be the canonical map. A direct computation shows that $\phi(pbp)$ is the Drazin inverse of $\phi(ap-\lambda p)$ in the quotient algebra $p\mathcal{A}p/ p\mathrm{soc}(\mathcal{A})p$. This shows that $\lambda \notin \sigma_{BF}(ap, p\mathcal{A}p)$. Conversely, suppose that $\lambda \notin \sigma_{BF}(ap, p\mathcal{A}p)$. Then there exist $b=pbp \in p\mathcal{A}p$ and $k \in \mathbb{N}$ such that
$\phi(ap-\lambda p)\phi(b)=\phi(b)\phi(ap-\lambda p),$
$$\phi(b)\phi(ap-\lambda p)\phi(b)=\phi(b) \makebox{ and } \phi^{k}(ap-\lambda p)\phi(b)\phi(ap-\lambda p)=\phi^{k}(ap-\lambda p).$$
Clearly, if $\lambda=0$ then $\pi(b)$ is the Drazin inverse of $\pi(ap)$ in the quotient algebra $\mathcal{A}/\mathrm{soc}(\mathcal{A})$.
Consider the other case $\lambda\neq0$. Let $q=1-p$. Note that $ap-\lambda=ap-\lambda p-\lambda q$. Then we have
$$\pi(ap-\lambda)\pi(b-\frac{1}{\lambda}q)=\pi(b-\frac{1}{\lambda}q)\pi(ap-\lambda),$$
$$\pi(b-\frac{1}{\lambda}q)\pi(ap-\lambda)\pi(b-\frac{1}{\lambda}q)=\pi(b-\frac{1}{\lambda}q)$$
and
$$\pi^{k}(ap-\lambda) \pi(b-\frac{1}{\lambda}q)\pi(ap-\lambda)=\pi^{k}(ap-\lambda).$$
Therefore, $\lambda \notin \sigma_{BF}(ap, \mathcal{A})$. This completes the proof of (\ref{eq5.1}).

To prove (\ref{eq5.2}), from the above arguments, it remains to show that if $\lambda \notin \sigma_{BF}(ap, p\mathcal{A}p)$, then
$$\makebox{i}(ap- \lambda p)=\makebox{i}(ap- \lambda).$$
When $\lambda =0$, there is nothing to prove. If $\lambda \neq 0$, by the definition of the B-Fredholm index,
$$\makebox{i}(ap- \lambda p)=\tau[(ap-\lambda p)b-b(ap-\lambda p)]=\tau (ab-ba)$$
and
$$\makebox{i}(ap- \lambda)=\tau[(ap-\lambda)(b-\frac{1}{\lambda}q)-(b-\frac{1}{\lambda}q)(ap-\lambda)]=\tau (ab-ba).$$

To prove (\ref{eq5.3}), by (\ref{eq5.1}), it remains to show that
$$\sigma_{BF}(a, \mathcal{A})=\sigma_{BF}(ap, p\mathcal{A}p) \cup \sigma_{BF}(aq, q\mathcal{A}q),$$
where $q=1-p$.
Suppose that $\lambda \notin \sigma_{BF}(a, \mathcal{A})$, then
$\pi(a-\lambda)$ has a Drazin inverse $\pi(b)$ in quotient algebra $\mathcal{A} / soc(\mathcal{A})$, for some $b \in \mathcal{A}$.
A simple computation shows that $\pi(pbp)$ (resp. $\pi(qbq)$) is the Drazin inverse of $\pi(ap-\lambda p)$ (resp. $\pi(aq-\lambda q)$) in
the quotient algebra $p\mathcal{A}p/ p\mathrm{soc}(\mathcal{A})p$ (resp. $q\mathcal{A}q/ q\mathrm{soc}(\mathcal{A})q$). This shows that
$\lambda \notin  \sigma_{BF}(ap, p\mathcal{A}p) \cup \sigma_{BF}(aq, q\mathcal{A}q)$.
Conversely, let $\lambda \notin  \sigma_{BF}(ap, p\mathcal{A}p) \cup \sigma_{BF}(aq, q\mathcal{A}q)$. Then there exists $b \in p\mathcal{A}p$ (resp. $c \in q\mathcal{A}q$) such that $\pi(ap-\lambda p)$ (resp. $\pi(aq-\lambda q)$) has a Drazin inverse $\pi(b)$ (resp. $\pi(c)$) in the quotient algebra $p\mathcal{A}p/ p\mathrm{soc}(\mathcal{A})p$ (resp. $q\mathcal{A}q/ q\mathrm{soc}(\mathcal{A})q$). Hence
$$\pi(a-\lambda)\pi(b+c)=\pi(b+c)\pi(a-\lambda),$$
$$\pi(b+c)\pi(a-\lambda)\pi(b+c)=\pi(b+c)$$
and
$$\pi^{k}(a-\lambda)\pi(b+c)\pi(a-\lambda)=\pi^{k}(a-\lambda),$$
when $k$ is grater than the Drazin index of $\pi(ap-\lambda p)$ and $\pi(aq-\lambda q)$.
This shows that $\lambda \notin \sigma_{BF}(a, \mathcal{A})$, and completes the proof of the equality (\ref{eq5.3}).
\end{proof}

We are now in a position to give the proof of the main result of this section.

\begin {theorem} ${\label{5.5}}$  \begin{upshape}
Let $\mathcal{A}$ be a unital semisimple Banach algebra and $f \in \mathcal{A}$. Then the following statements are equivalent:

(i) $f^{n} \in \mathrm{soc}(\mathcal{A})$ for some $n \in \mathbb{N}$;

(ii) $\sigma_{BF}(x+f) =\sigma_{BF}(x)$ for all $x \in \mathcal{A}$ commuting with $f$.


Additionally, if $\mathcal{A}$ is primitive then the above conditions are also equivalent to the following assertion:

(iii) $\sigma_{BW}(x+f) =\sigma_{BW}(x)$ for all $x \in \mathcal{A}$ commuting with $f$.
 \end{upshape}
\end {theorem}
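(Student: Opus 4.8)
The plan is to prove the two implications $(i) \Longrightarrow (ii)$ and $(ii) \Longrightarrow (i)$, and then in the primitive case handle the equivalence with $(iii)$. The forward direction $(i) \Longrightarrow (ii)$ should follow almost immediately from Theorem \ref{5.3}. Indeed, if $f^{n} \in \mathrm{soc}(\mathcal{A})$ and $x$ commutes with $f$, then for every $\lambda \in \mathbb{C}$ the element $f$ also commutes with $x-\lambda$, so $x-\lambda \in B\Phi(\mathcal{A})$ forces $x-\lambda + f = (x+f)-\lambda \in B\Phi(\mathcal{A})$ by Theorem \ref{5.3}. This yields $\sigma_{BF}(x+f) \subseteq \sigma_{BF}(x)$. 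For the reverse containment I would observe that $-f$ also satisfies $(-f)^{n} \in \mathrm{soc}(\mathcal{A})$ and commutes with $x+f$, and that $x = (x+f) + (-f)$; applying the same argument gives $\sigma_{BF}(x) \subseteq \sigma_{BF}(x+f)$, hence equality.

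The main work lies in $(ii) \Longrightarrow (i)$, and here I expect to follow the strategy of Ha\"{\i}ly--Kaidi--Rodr\'{\i}guez Palacios combined with Corollary \ref{4.4} and equation (\ref{eq5.4}). The key idea is to test the hypothesis $(ii)$ against well-chosen elements $x$ commuting with $f$. First I would take $x = -f$, which trivially commutes with $f$; then $(ii)$ gives $\sigma_{BF}(-f+f) = \sigma_{BF}(-f)$, i.e.\ $\sigma_{BF}(0) = \sigma_{BF}(f)$. Since $0 \in \mathrm{soc}(\mathcal{A})$, the element $0$ is B-Fredholm with $\sigma_{BF}(0) = \emptyset$, so we conclude $\sigma_{BF}(f) = \emptyset$. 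By Corollary \ref{4.4}, this means $f$ is algebraic, so $f$ satisfies a nontrivial polynomial identity. The task is then to upgrade ``algebraic'' to ``$f^{n} \in \mathrm{soc}(\mathcal{A})$ for some $n$,'' and this is precisely where the localization techniques of Lemma \ref{5.4} and the semisimplicity hypothesis enter.

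To carry out this upgrade I would exploit the minimal polynomial of the algebraic element $f$. Writing the monic minimal polynomial as $q(t) = t^{k}\prod_{j}(t-\mu_{j})^{m_{j}}$ with the $\mu_{j}$ nonzero and distinct, functional calculus produces a family of mutually orthogonal idempotents $p_{0}, p_{1}, \dots$ summing to $1$, all commuting with $f$, decomposing $\mathcal{A}$ accordingly. On the summand $p_{0}\mathcal{A}p_{0}$ the element $fp_{0}$ is nilpotent, while on each $p_{j}\mathcal{A}p_{j}$ ($j \geq 1$) the element $(f-\mu_{j})p_{j}$ is nilpotent, so $fp_{j}$ is invertible in $p_{j}\mathcal{A}p_{j}$. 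Using Lemma \ref{5.4}, particularly the decomposition (\ref{eq5.3}) and the invariance (\ref{eq5.1}), I would reduce the problem to showing $f p_{0}$ lands in the socle after a power, and the decisive claim is that the invertible pieces $p_{j}$ ($j\geq 1$) must themselves lie in $\mathrm{soc}(\mathcal{A})$. To force this I would again invoke hypothesis $(ii)$ on a cleverly chosen perturbation localized to a single summand: for a fixed $j \geq 1$, testing $(ii)$ against an element supported on $p_{j}\mathcal{A}p_{j}$ and comparing B-Fredholm spectra, combined with the invertibility of $fp_{j}$ there, should show that $p_{j}$ is a finite-order idempotent, hence $p_{j}\in \mathrm{soc}(\mathcal{A})$; the semisimplicity ensures that the trace/multiplicity bookkeeping in Lemma \ref{5.4} is faithful. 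The hardest step is this last reduction, extracting socle-membership of each invertible spectral idempotent $p_{j}$ from the spectral-invariance hypothesis alone; once all $p_{j}$ with $j\geq 1$ are in $\mathrm{soc}(\mathcal{A})$ and $fp_{0}$ is nilpotent, a sufficiently high power $f^{N}$ will lie in $\mathrm{soc}(\mathcal{A})$, giving $(i)$.

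For the primitive case, I would first note $\sigma_{BF}(x) \subseteq \sigma_{BW}(x)$ always holds, so $(iii)$ plus the index equality of Theorem \ref{5.3} under a primitive hypothesis links the two spectra. The implication $(i) \Longrightarrow (iii)$ follows from Theorem \ref{5.3}, since commuting perturbation by $f$ preserves both B-Fredholmness and the index, hence preserves B-Weylness. For $(iii) \Longrightarrow (i)$, I would run the same $x=-f$ test: $(iii)$ gives $\sigma_{BW}(f) = \sigma_{BW}(0) = \emptyset$, and by (\ref{eq5.4}) this again makes $f$ algebraic, after which the identical localization argument applies. The only subtlety is that the index equality in Theorem \ref{5.3} requires primitivity, which is exactly why $(iii)$ is listed as equivalent only under that extra hypothesis.
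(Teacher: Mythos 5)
Your overall architecture matches the paper's: (i)$\Rightarrow$(ii) via Theorem \ref{5.3} applied to both $f$ and $-f$; for (ii)$\Rightarrow$(i), test the hypothesis to get $\sigma_{BF}(f)=\emptyset$, invoke Corollary \ref{4.4} to conclude $f$ is algebraic, split $1=\sum p_{i}$ into the spectral idempotents of the minimal polynomial, and reduce to showing that each $p_{i}$ with nonzero eigenvalue lies in $\mathrm{soc}(\mathcal{A})$. But at exactly the point you label ``the hardest step'' the argument stops: you say that testing (ii) against ``an element supported on $p_{j}\mathcal{A}p_{j}$,'' combined with the invertibility of $fp_{j}$ there, ``should show'' that $p_{j}$ has finite order, without saying which element to choose or why the conclusion follows. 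That is the genuine content of the implication, and it is missing. The paper's argument is: if $\dim p\mathcal{A}p=\infty$, then by [\cite{Haily-Kaidi-Palacios}, Theorem 2.3] the semisimple algebra $p\mathcal{A}p$ contains a \emph{non-algebraic} element $b$ commuting with $fp$, so $\sigma_{BF}(b,p\mathcal{A}p)\neq\emptyset$ by Corollary \ref{4.4}; then hypothesis (ii), Lemma \ref{5.4} and Theorem \ref{5.3} (using that $(f-\lambda)p$ is nilpotent) combine to give $\lambda+\sigma_{BF}(b,p\mathcal{A}p)\subseteq\sigma_{BF}(b,p\mathcal{A}p)$ with $\lambda\neq 0$, contradicting the fact that this set is nonempty and bounded. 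Finite-dimensionality of $p_{i}\mathcal{A}p_{i}$ then gives $p_{i}\in\mathrm{soc}(\mathcal{A})$. Note also that semisimplicity enters through the existence of the non-algebraic element and through this last step, not through any ``trace/multiplicity bookkeeping'' in Lemma \ref{5.4}. Without the non-algebraic element and the translation trick, your sketch does not close.

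A second, smaller gap is in (iii)$\Rightarrow$(i): you assert that ``the identical localization argument applies,'' but the decomposition (\ref{eq5.3}) of Lemma \ref{5.4} is an equality for $\sigma_{BF}$, not for $\sigma_{BW}$. The paper must replace the equality (\ref{eq5.6}) by the inclusion $\sigma_{BW}(b+fp,\mathcal{A})\subseteq\sigma_{BW}(b+f,\mathcal{A})$, which it proves separately by showing $(b+f-\lambda)(1-p)$ is B-Weyl in $(1-p)\mathcal{A}(1-p)$ and then splitting the index across the idempotents via the additivity of the trace ([\cite{Aupetit-Mouton}, Theorem 3.3]). That step is not automatic and needs to be supplied.
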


\begin{proof}
$\makebox{(i)} \Longrightarrow \makebox{(ii)}$ By Theorem \ref{5.3}.

$\makebox{(ii)} \Longrightarrow \makebox{(i)}$ Taking $x=0$ in the assumption (ii), we get that $\sigma_{BF}(f)=\emptyset$.
Hence by Corollary \ref{4.4}, $f$ is algebraic.  Let
$p(\lambda)=(\lambda-\lambda_{1})^{k_{1}} (\lambda-\lambda_{2})^{k_{2}}\cdots (\lambda-\lambda_{n})^{k_{n}}$ be the minimal polynomial of $f$.
Observe that $p(L_{f})=0$. Now, by [\cite{Aiena-book}, Lemma 1.76], $\mathcal{A} = \bigoplus\limits_{i=1}^{n} R((f-\lambda_{i})^{k_{i}})$.
Hence there exist uniquely determined elements $p_{i} \in R((f-\lambda_{i})^{k_{i}})$ such that $1=\sum\limits_{i=1}^{n} p_{i}$. Clearly,
$p_{1},p_{2},\cdots,p_{n}$ are orthogonal idempotents commuting with $f$, such that $(f-\lambda_{i})^{k_{i}}p_{i}=0$ for every $i=1,2,\cdots,n$.
Precisely, $p_{i}$ is the spectral projection associated with $f$ and $\{\lambda_{i}\}$, for $1\leq i \leq n$.


We claim that $\dim p_{i} \mathcal{A} p_{i}<\infty$ when $\lambda_{i}\neq 0$. Suppose that this is not true. Then there exists
$\lambda := \lambda_{i}\neq 0$ and $ p:= p_{i}$ such that $\dim p \mathcal{A} p = \infty$. By [\cite{Haily-Kaidi-Palacios}, Theorem 2.3],
we may find a non-algebraic element $b$, which commutes with $fp$, in the semisimple Banach algebra $p \mathcal{A} p$. Clearly, $b$ commutes with $f$, and by
Corollary \ref{4.4},
\begin{equation} {\label{eq5.5}}
 \qquad\qquad\qquad\qquad\qquad\qquad   \sigma_{BF}(b,p\mathcal{A} p) \neq \emptyset.
 \end{equation}
By the assumption (ii),
$$\sigma_{BF}(b, \mathcal{A})=\sigma_{BF}(b+f, \mathcal{A}).$$
By Lemma \ref{5.4},
\begin{equation} {\label{eq5.6}}
 \qquad\qquad\qquad \sigma_{BF}(b+f, \mathcal{A}) = \sigma_{BF}(b+fp, \mathcal{A}) \cup \sigma_{BF}(f(1-p), \mathcal{A}).
 \end{equation}
Since $(f-\lambda)p$ is nilpotent, by Theorem \ref{5.3},
$$\sigma_{BF}(b+fp, \mathcal{A})= \sigma_{BF}(b+\lambda p, \mathcal{A}).$$
Again by Lemma \ref{5.4},
\begin{align*}
\qquad \qquad \lambda+\sigma_{BF}(b, p\mathcal{A}p)&=\sigma_{BF}(\lambda p+b, p\mathcal{A}p) \\
                                     &=\sigma_{BF}(\lambda p+b,\mathcal{A})\subseteq \sigma_{BF}(b, \mathcal{A})=\sigma_{BF}(b, p\mathcal{A}p).
\end{align*}
This contradicts to the facts that $\lambda \neq 0$ and $\sigma_{BF}(b, p\mathcal{A}p)$ is bounded.

Now by [\cite{Aiena-book}, Theorem 5.24], $p_{i} \in soc(\mathcal{A})$ when $\lambda_{i}\neq 0$. But $fp_{i}$ is nilpotent when $\lambda_{i}=0$.
Consequently, from $f=f(\sum\limits_{i=1}^{n} p_{i})$ we conclude that $f$ has the desired property.

$\makebox{(i)} \Longrightarrow \makebox{(iii)}$ By Theorem \ref{5.3} again.

$\makebox{(iii)} \Longrightarrow \makebox{(i)}$ Applying the proof of $\makebox{(ii)} \Longrightarrow \makebox{(i)}$ to the B-Weyl spectrum,
it only remains to replace (\ref{eq5.5}) with $\sigma_{BW}(b,p\mathcal{A} p) \neq \emptyset$, and replace (\ref{eq5.6}) with
\begin{equation} {\label{eq5.7}}
 \qquad\qquad\qquad\qquad\qquad  \sigma_{BW}(b+fp, \mathcal{A}) \subseteq \sigma_{BW}(b+f, \mathcal{A}).
 \end{equation}
But $\sigma_{BW}(b,p\mathcal{A} p) \neq \emptyset$ is a consequence of the equivalence (\ref{eq5.4}). Next we prove the inclusion (\ref{eq5.7}).

Note that, for $\lambda \in \mathbb{C}$,
$$(f-\lambda)(1-p)=(f-\lambda)\sum\limits_{j=1,j\neq i}^{n}p_{j}=\sum\limits_{j=1,j\neq i}^{n}(f-\lambda_{j})p_{j}+(\lambda_{j}-\lambda)p_{j}.$$
Since $(f-\lambda_{j})p_{j}$ is nilpotent and $(\lambda_{j}-\lambda)p_{j}$ is invertible or equals to zero in $p_{j}\mathcal{A}p_{j},$
$(f-\lambda_{j})p_{j}+(\lambda_{j}-\lambda)p_{j}$ is Drazin invertible in $p_{j}\mathcal{A}p_{j}.$ Thus, $(f-\lambda)(1-p)$ is Drazin invertible in $(1-p)\mathcal{A}(1-p).$ Therefore, $(b+f-\lambda)(1-p)$ is B-Weyl in $(1-p)\mathcal{A}(1-p).$

Let $\lambda \notin  \sigma_{BW}(b+f, \mathcal{A}).$ Then by Lemma \ref{5.4}, $(b+f-\lambda)p$ is B-Fredholm in $p\mathcal{A} p$. By the additivity of the trace (see [\cite{Aupetit-Mouton}, Theorem 3.3(i)), we infer that
$$\makebox{i}(b+f-\lambda) = \makebox{i}((b+f-\lambda)p) + \makebox{i}((b+f-\lambda)(1-p)).$$
This implies that $\makebox{i}((b+f-\lambda)p)=0$, and so $\lambda \notin \sigma_{BW}(b+fp, p\mathcal{A}p)$. By Lemma \ref{5.4} again, $\lambda \notin \sigma_{BW}(b+fp, \mathcal{A})$, which completes the proof of (\ref{eq5.7}).
\end{proof}

 \section{B-Fredholm elements which are Riesz} \label{sec6}

Throughout this section, we assume that $\mathcal{A}$ is a unital semisimple Banach algebra.
Following Pearlman [\cite{Pearlman-Riesz-points}], an element $a \in \mathcal{A}$ is called a Riesz element if $\phi(a)$ is quasinilpotent, where
$\phi:\mathcal{A} \longrightarrow \mathcal{A} /\overline{soc(\mathcal{A})}$ is the canonical quotient homomorphism. In the previous section, we characterize elements in the class
$$\mathcal{F}:=\{f \in \mathcal{A}: f^{n} \in \mathrm{soc}(\mathcal{A}) \makebox{ for some }n \in \mathbb{N}\},$$
by means of the commuting perturbational invariance of the B-Fredholm spectrum. In this section, we give some other characterizations of $\mathcal{F}$ from a different perspective. In particular, we show that $\mathcal{F}$ is precisely the intersection of the class of Riesz elements and the class of B-Fredholm elements. In order to do this we need the following characterization of Riesz elements, which is due to Pearlman.

\begin {lemma} ${\label{7.1}}$  \begin{upshape} ([\cite{Pearlman-Riesz-points}, Corollary 4.13]) Let $x \in \mathcal{A}$. Then $x$ is a Riesz element if and only if $x-\lambda$ is Fredholm and $p_{l}(x-\lambda)=q_{l}(x-\lambda)<\infty$ for all nonzero $\lambda \in \mathbb{C}$.
 \end{upshape}
\end {lemma}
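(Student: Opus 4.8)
The plan is to prove this characterization of Riesz elements by translating the statement about the element $x$ into one about its left multiplication operator $L_x$, and then invoking the established operator-theoretic characterization of Riesz operators together with the dictionary relating element-theoretic and operator-theoretic quantities. Recall that $x$ is Riesz means $\phi(x)$ is quasinilpotent in $\mathcal{A}/\overline{soc(\mathcal{A})}$; the central idea is that this condition corresponds exactly to $L_x$ being a Riesz operator on the Banach space $\mathcal{A}$, that is, $\sigma_{BF}(x) \subseteq \{0\}$ (equivalently $x$ is meromorphic, by Corollary \ref{4.9}).

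First I would establish the forward implication. Assume $x$ is Riesz. By Pearlman's theory, the spectrum of $\phi(x)$ reduces to $\{0\}$, so every nonzero $\lambda$ lies in $\rho_{BF}(x)$; in fact $\sigma_{BF}(x)\subseteq\{0\}$, so by Corollary \ref{4.9} the element $x$ is meromorphic and every nonzero point of $\sigma(x)$ is a pole of the resolvent. Fix a nonzero $\lambda$. Since $\lambda \in \rho_{BF}(x)$, it lies in some component $\Omega$ of $\rho_{BF}(x)$. Because $\sigma_{BF}(x)\subseteq\{0\}$, the resolvent set $\rho(x)$ is nonempty and meets $\Omega$ (the unbounded region belongs to a single component intersecting $\rho(x)$), so case $(1)$ of Theorem \ref{4.3} applies throughout $\Omega$. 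This gives simultaneously that $x-\lambda$ is B-Fredholm and $p_{l}(x-\lambda)=q_{l}(x-\lambda)<\infty$. To upgrade B-Fredholmness to Fredholmness, I would use that a nonzero pole (or isolated spectral point with finite ascent and descent of $L_{x-\lambda}$) forces the annihilators $R((x-\lambda)^{k})$ and $L((x-\lambda)^{k})$ to stabilize at finite-order ideals, hence $x-\lambda$ itself is invertible modulo $soc(\mathcal{A})$, i.e. Fredholm.

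Conversely, suppose $x-\lambda$ is Fredholm with $p_{l}(x-\lambda)=q_{l}(x-\lambda)<\infty$ for every nonzero $\lambda$. Fredholmness for all nonzero $\lambda$ gives $\sigma_{BF}(x)\subseteq\{0\}$, and the equal finite ascent/descent condition, by Boasso's result cited in Corollary \ref{4.4}, means $x-\lambda$ is Drazin invertible for each nonzero $\lambda$; combined, each nonzero point is either in $\rho(x)$ or a pole of the resolvent, so $\sigma(\phi(x))\subseteq\{0\}$, i.e. $\phi(x)$ is quasinilpotent and $x$ is Riesz.

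The main obstacle I anticipate is the passage from the B-Fredholm (Drazin-modulo-socle) condition to genuine Fredholmness (invertibility modulo the socle) at nonzero points. The subtlety is that finite ascent and descent of $L_{x-\lambda}$ a priori only yield that $\phi(x-\lambda)$ is invertible in $\mathcal{A}/\overline{soc(\mathcal{A})}$, whereas Fredholmness demands invertibility modulo $soc(\mathcal{A})$ itself; bridging the uncompleted and completed socle requires the punctured neighbourhood theorem (Theorem \ref{2.6}) to control $null(x-\lambda)$ and $def(x-\lambda)$ near the point, ensuring the relevant annihilator ideals have genuinely finite order rather than merely finite order modulo closure. I would rely on Theorem \ref{2.6} together with the stabilization of $R((x-\lambda)^{n})$ and $L((x-\lambda)^{n})$ to close this gap cleanly.
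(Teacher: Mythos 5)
The paper does not prove this lemma at all: it is quoted verbatim as Pearlman's Corollary~4.13 and used as a black box, so there is no in-paper argument to compare yours against. Judged on its own merits, your proposal contains a genuine gap, located exactly where you place the weight of the argument. Your ``central idea'' is that $x$ being Riesz corresponds exactly to $\sigma_{BF}(x)\subseteq\{0\}$, equivalently to $x$ being meromorphic. This equivalence is false in one direction. A nontrivial idempotent $p$ whose spectral projection does not lie in $\mathrm{soc}(\mathcal{A})$ (e.g.\ in a summand of $\mathcal{A}$ with trivial socle) is Drazin invertible, hence has $\sigma_{BF}(p)=\emptyset$ and is meromorphic, yet $\phi(p)$ is a nonzero idempotent in $\mathcal{A}/\overline{\mathrm{soc}(\mathcal{A})}$ and so is not quasinilpotent: $p$ is not Riesz. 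Being Riesz is strictly stronger than being meromorphic; the extra content is precisely that the spectral idempotents at nonzero spectral points lie in the socle. This error propagates into both directions of your argument. In the converse direction, the step ``each nonzero point is either in $\rho(x)$ or a pole of the resolvent, so $\sigma(\phi(x))\subseteq\{0\}$'' is a non sequitur: a pole alone does not remove a point from $\sigma(\phi(x))$. What is needed is that the Fredholmness of $x-\lambda$ forces the associated spectral idempotent $p_{\lambda}$ into $\mathrm{soc}(\mathcal{A})$ (write $u(x-\lambda)^{k}=1-s$ with $s\in \mathrm{soc}(\mathcal{A})$ and apply it to $p_{\lambda}$, using $(x-\lambda)^{k}p_{\lambda}=0$), after which $\phi(x-\lambda)=\phi\bigl((x-\lambda)(1-p_{\lambda})\bigr)$ is invertible. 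You never perform this step.

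In the forward direction your route to Fredholmness is also not viable as stated. You propose to deduce that $x-\lambda$ is invertible modulo $\mathrm{soc}(\mathcal{A})$ from the finite ascent and descent of $L_{x-\lambda}$ via ``stabilization of the annihilators at finite-order ideals,'' but finite ascent and descent give no control whatever on the order of $R(x-\lambda)$ or $L(x-\lambda)$ (again, an idempotent is the counterexample: $R(p)=(1-p)\mathcal{A}$ can have infinite order). The correct and much more elementary mechanism is the Atkinson-type perturbation argument: since $\phi(x)-\lambda$ is invertible in $\mathcal{A}/\overline{\mathrm{soc}(\mathcal{A})}$ for $\lambda\neq 0$, one lifts invertibility modulo the closed ideal $\overline{\mathrm{soc}(\mathcal{A})}$ to invertibility modulo $\mathrm{soc}(\mathcal{A})$ by approximating the defect $k\in\overline{\mathrm{soc}(\mathcal{A})}$ by $j\in \mathrm{soc}(\mathcal{A})$ with $\|k-j\|<1$ and inverting $1-(k-j)$; Theorem~\ref{2.6} plays no role here. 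Once Fredholmness of $x-\lambda$ for all $\lambda\neq 0$ is secured by this route, your use of Theorem~\ref{4.3}, case $(1)$, on the single component $\mathbb{C}\setminus\{0\}$ of $\rho_{BF}(x)$ does correctly yield $p_{l}(x-\lambda)=q_{l}(x-\lambda)<\infty$; that part of the plan is sound.
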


\begin {theorem} ${\label{7.2}}$  \begin{upshape}
Let $\mathcal{A}$ be a semisimple Banach algebra and $f \in \mathcal{A}$. The following statements are equivalent:

(i) $f^{n} \in \mathrm{soc}(\mathcal{A})$ for some $n \in \mathbb{N}$;

(ii) $f$ is a Riesz and Drazin invertible element;

(iii) $f$ is a Riesz and B-Weyl element;

(iv) $f$ is a Riesz and B-Fredholm element.
 \end{upshape}
\end {theorem}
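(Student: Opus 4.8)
The plan is to prove the cyclic chain of implications $(i)\Longrightarrow(ii)\Longrightarrow(iii)\Longrightarrow(iv)\Longrightarrow(i)$, exploiting the fact that (iii) and (iv) are progressively weaker than (ii), so the two middle implications should be essentially free. First I would establish $(i)\Longrightarrow(ii)$. Suppose $f^{n}\in\mathrm{soc}(\mathcal{A})$. Since $\mathrm{soc}(\mathcal{A})\subseteq\overline{soc(\mathcal{A})}$, the element $\phi(f)^{n}=\phi(f^{n})=0$, so $\phi(f)$ is nilpotent, hence quasinilpotent; thus $f$ is Riesz. For Drazin invertibility I would pass to the quotient $\mathcal{A}/soc(\mathcal{A})$: the image $\pi(f)$ satisfies $\pi(f)^{n}=\pi(f^{n})=0$, so $\pi(f)$ is nilpotent and therefore trivially Drazin invertible (its Drazin inverse is $0$). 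Hence $f$ is B-Fredholm, and in fact Drazin invertible as an element by the same argument carried out in $\mathcal{A}$ — here one should note that a Riesz element whose image in $\mathcal{A}/soc(\mathcal{A})$ is nilpotent is itself Drazin invertible, which can be read off by combining the Rieszness with Lemma \ref{7.1} and Corollary \ref{4.4}.

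The implications $(ii)\Longrightarrow(iii)$ and $(iii)\Longrightarrow(iv)$ are immediate from the inclusion chain $\sigma_{BF}(a)\subseteq\sigma_{BW}(a)\subseteq\sigma_{D}(a)$ noted after the definition of the B-Weyl spectrum: a Drazin invertible element has $0\notin\sigma_{D}(f)$, hence $0\notin\sigma_{BW}(f)$, so $f$ is B-Weyl; and every B-Weyl element is B-Fredholm by definition. In each case Rieszness is simply carried along unchanged, so these two steps require no real work.

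The substantive implication is $(iv)\Longrightarrow(i)$, which I expect to be the main obstacle. Assume $f$ is Riesz and B-Fredholm. The strategy is to show the spectrum $\sigma(f)$ is finite and then localize. Rieszness forces $\sigma(f)$ to have at most $0$ as an accumulation point, so every nonzero spectral point is isolated; by Lemma \ref{7.1}, at each nonzero $\lambda$ the element $f-\lambda$ is Fredholm with $p_{l}(f-\lambda)=q_{l}(f-\lambda)<\infty$, i.e. $f-\lambda$ is Drazin invertible, so each nonzero spectral point is a pole (a point of $\Pi(f)$). The B-Fredholmness of $f$ means $0\in\rho_{BF}(f)$, so by Corollary \ref{4.4} the point $0$, if it lies in $\partial\sigma(f)$, is also a pole; more carefully, $0\in\rho_{BF}(f)$ together with the isolated-nonzero-spectrum picture should force $\sigma(f)$ to be finite, whence $f$ is algebraic by Corollary \ref{4.4}. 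Once $f$ is algebraic and Riesz, I would use the spectral decomposition from the proof of Theorem \ref{5.5}: writing the minimal polynomial and the associated orthogonal spectral idempotents $p_{1},\dots,p_{m}$, Rieszness forces each $p_{i}$ with $\lambda_{i}\neq0$ to lie in $soc(\mathcal{A})$ (via [\cite{Aiena-book}, Theorem 5.24], exactly as in Theorem \ref{5.5}), while $fp_{i}$ is nilpotent when $\lambda_{i}=0$. Summing $f=f\sum_i p_i$ and raising to a suitable power then yields $f^{n}\in soc(\mathcal{A})$.

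The delicate point in that last implication is handling the spectral value $0$: Rieszness controls the nonzero spectrum, but one must use the B-Fredholmness at $0$ to guarantee that $0$ does not spoil finiteness of $\sigma(f)$ or algebraicity, and to ensure the nilpotency of the $\lambda_i=0$ summand. I would reuse the machinery already developed for Theorem \ref{5.5} — in particular the fact $soc(p\mathcal{A}p)=p\,soc(\mathcal{A})\,p$ and the passage between $\sigma_{BF}$ in $\mathcal{A}$ and in corner algebras $p\mathcal{A}p$ from Lemma \ref{5.4} — to carry out this localization cleanly rather than arguing from scratch.
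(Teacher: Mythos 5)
Your overall architecture --- the cycle $(i)\Rightarrow(ii)\Rightarrow(iii)\Rightarrow(iv)\Rightarrow(i)$ with the two middle steps free and the substantive work in the first and last --- matches the paper, and much of your reasoning is sound. But two steps do not go through as written. In $(iv)\Rightarrow(i)$ you conclude ``$\sigma(f)$ is finite, whence $f$ is algebraic by Corollary~\ref{4.4}''; Corollary~\ref{4.4} characterizes algebraicity by $\sigma_{BF}(f)=\emptyset$, not by finiteness of the spectrum (a non-nilpotent quasinilpotent element has finite spectrum but is not algebraic). The detour through $\sigma(f)$ is unnecessary: Lemma~\ref{7.1} already shows that $f-\lambda$ is Drazin invertible, hence B-Fredholm, for every $\lambda\neq 0$, so $\sigma_{BF}(f)\subseteq\{0\}$; the B-Fredholmness of $f$ then gives $\sigma_{BF}(f)=\emptyset$ and Corollary~\ref{4.4} applies directly --- this is exactly the paper's route. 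Your version is salvageable (every spectral point is a pole, so $\sigma_{D}(f)=\emptyset$ and algebraicity follows), but the appeal to Corollary~\ref{4.4} at that point is misplaced.

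More seriously, in the endgame of $(iv)\Rightarrow(i)$ you assert that Rieszness forces $p_{i}\in\mathrm{soc}(\mathcal{A})$ for $\lambda_{i}\neq 0$ via [\cite{Aiena-book}, Theorem 5.24] ``exactly as in Theorem~\ref{5.5}.'' In Theorem~\ref{5.5} that citation is invoked only after $\dim p_{i}\mathcal{A}p_{i}<\infty$ has been established, and that finite-dimensionality is extracted there from the perturbational hypothesis of Theorem~\ref{5.5}, which is not available here; so the argument does not transfer verbatim, and you would need the separate (true but nontrivial) fact that spectral idempotents of Riesz elements at nonzero spectral points lie in the socle. The paper replaces this with a short direct computation: $fp_{i}$ is Riesz in the corner algebra $p_{i}\mathcal{A}p_{i}$ and $\lambda_{i}\neq 0$, so $(f-\lambda_{i})p_{i}$ is invertible modulo $p_{i}\mathrm{soc}(\mathcal{A})p_{i}$ with inverse $\phi(g_{i})$, and then $(f-\lambda_{i})^{k_{i}}p_{i}=0$ forces $\phi(p_{i})=\phi((f-\lambda_{i})^{k_{i}}p_{i})\phi(g_{i}^{k_{i}})=0$, i.e.\ $p_{i}\in\mathrm{soc}(\mathcal{A})$. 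Finally, your route for $(i)\Rightarrow(ii)$ (show $f$ is Riesz and B-Fredholm, then recycle the $(iv)\Rightarrow(i)$ machinery to get algebraicity and hence Drazin invertibility) is legitimate and non-circular, though it differs from the paper, which instead stabilizes the decreasing chain $f^{m}\mathcal{A}$ of finite-order right ideals to get $q_{l}(f-\lambda)<\infty$ for all $\lambda$ and then invokes the descent-spectrum characterization of algebraic elements.
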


\begin{proof}
$\makebox{(i)} \Longrightarrow \makebox{(ii)}$ Since $f^{n} \in \mathrm{soc}(\mathcal{A})$, $f^{n}$ is Riesz, and hence $f$ is also Riesz.
Next we show that $f$ is Drazin invertible.

Noting that $f^{n} \in \mathrm{soc}(\mathcal{A})$, it follows that $\{f^{m}\mathcal{A}\}_{m=n}^{\infty}$ is a decreasing sequence of right ideals of finite order. Hence we can choose an integer $m \geq n$ such that $f^{m}\mathcal{A}=f^{m+1}\mathcal{A}$. This, together with Lemma \ref{7.1}, implies that $q_{l}(f-\lambda)<\infty$ for all $\lambda \in \mathbb{C}$. As a consequence of [\cite{Burgos-Kaidi-Mbekhta-Oudghiri}, Theorem 1.5], $f$ is algebraic. Hence by [\cite{Boasso-algebraic element in BA}, Theorem 2.1], $f$ is Drazin invertible.

$\makebox{(ii)} \Longrightarrow \makebox{(iii)} \Longrightarrow \makebox{(iv)}$ Clear.

$\makebox{(iv)} \Longrightarrow \makebox{(i)}$ Since $f$ is Riesz, by Lemma \ref{7.1}, $\sigma_{BF}(f)\subseteq \{0\}$. Hence $\sigma_{BF}(f)= \emptyset$, because $f$ is B-Fredholm by hypothesis. Now Corollary \ref{4.4} ensures that $f$ is algebraic. As in the proof of $\makebox{(ii)} \Longrightarrow \makebox{(i)}$ in Theorem \ref{5.5}, there exist orthogonal idempotents $p_{1},p_{2},\cdots,p_{n}$, commuting with $f$, such that $1=\sum\limits_{i=1}^{n} p_{i}$ and $(f-\lambda_{i})^{k_{i}}p_{i}=0$ for every $1 \leq i\leq n$. In order to complete the proof, it remains to show that $p_{i}$ lies in the socle of $\mathcal{A}$ when $\lambda_{i} \neq 0$.

Clearly $fp_{i}$ is a Riesz element in $p_{i}\mathcal{A}p_{i}$. In particular, $(f-\lambda_{i})p_{i}$ is a Fredholm element in $p_{i}\mathcal{A}p_{i}$.
Let $\phi:p_{i}\mathcal{A}p_{i}\rightarrow p_{i}\mathcal{A}p_{i}/ p_{i}\mathrm{soc}(A)p_{i}$ be the canonical quotient homomorphism. Hence, there is an element
$g_{i}$ in $p_{i}\mathcal{A}p_{i}$ such that
 $$\phi((f-\lambda_{i})p_{i})\phi(g_{i})=\phi(g_{i})\phi((f-\lambda_{i})p_{i})=\phi(p_{i}).$$
 Therefore, $$0=\phi((f-\lambda_{i})^{k_{i}}p_{i})\phi(g_{i}^{k_{i}})=\phi(p_{i}),$$
which is equivalent to $p_{i} \in p_{i}\mathrm{soc}(\mathcal{A})p_{i}\subseteq \mathrm{soc}(\mathcal{A})$.
\end{proof}







\section*{Acknowledgements}
This work has been supported by National Natural Science Foundation of China (Grant No. 11971108), Natural Science Foundation of Fujian Province (Grant No. 2020J01569) and Science and Technology Innovation Fund of Fujian Agriculture and Forestry University (Grant No. KFB23156).



\end{document}